\documentclass[12pt]{article} 
\usepackage{amsfonts,amsmath,latexsym,amssymb,mathrsfs,amsthm,comment,setspace}

\evensidemargin0cm
\oddsidemargin0cm
\textwidth16cm
\textheight22.8cm
\topmargin-1.7cm  



\let\OLDthebibliography\thebibliography
\renewcommand\thebibliography[1]{
	\OLDthebibliography{#1}
	\setlength{\parskip}{0pt}
	\setlength{\itemsep}{0pt plus 0.0ex}
}


%

\def\numberlikeadb{\global\def\theequation{\thesection.\arabic{equation}}}
\numberlikeadb
\newtheorem{theorem}{Theorem}[section]
\newtheorem{lemma}[theorem]{Lemma}
\newtheorem{corollary}[theorem]{Corollary}

\newtheorem{proposition}[theorem]{Proposition}
\newtheorem{remark}[theorem]{Remark}

\usepackage{color} 

\allowdisplaybreaks
\usepackage{lscape}
\usepackage{caption}
\usepackage{multirow}
\begin{document}

	\title{Improved bounds in Stein's method for functions of multivariate normal random vectors}
	\author{Robert E. Gaunt\footnote{Department of Mathematics, The University of Manchester, Oxford Road, Manchester M13 9PL, UK, robert.gaunt@manchester.ac.uk; heather.sutcliffe@manchester.ac.uk}\:\, and Heather Sutcliff$\mathrm{e}^{*}$}

	\date{} 
	\maketitle
	
	\vspace{-5mm}
	
	
\begin{abstract}In a recent paper, Gaunt \cite{gaunt normal} extended Stein's method to limit distributions that can be represented as a function $g:\mathbb{R}^d\rightarrow\mathbb{R}$ of a centered multivariate normal random vector $\Sigma^{1/2}\mathbf{Z}$ with $\mathbf{Z}$ a standard $d$-dimensional multivariate normal random vector and $\Sigma$ a non-negative definite covariance matrix. In this paper, we obtain improved bounds, in the sense of weaker moment conditions, smaller constants and simpler forms, for the case that $g$ has derivatives with polynomial growth. We obtain new non-uniform bounds for the derivatives of the solution of the Stein equation and use these inequalities to obtain general bounds
on the distance, measured using smooth test functions, between the distributions of $g(\mathbf{W}_n)$ and $g(\mathbf{Z})$, where $\mathbf{W}_n$ is a standardised sum of random vectors with independent components and $\mathbf{Z}$ is a standard $d$-dimensional multivariate normal random vector. We apply these general bounds to obtain bounds for the chi-square approximation of the family of power divergence statistics (special cases include the Pearson and likelihood ratio statistics), for the case of two cell classifications, that improve on existing results in the literature.
\end{abstract}

	\noindent{{\bf{Keywords:}}} Stein's method; functions of multivariate normal random vectors;  multivariate normal approximation; chi-square approximation;  rate of convergence; power divergence statistic
	
	\noindent{{{\bf{AMS 2010 Subject Classification:}}} Primary 60F05; 62E17

\section{Introduction}

Let $\mathbf{Z}$ be a standard $d$-dimensional multivariate normal random vector and let $\Sigma\in\mathbb{R}^{d\times d}$ be a non-negative definite covariance matrix, so that $\Sigma^{1/2}\mathbf{Z}\sim \mathrm{MVN}_d(\mathbf{0},\Sigma)$. By the continuous mapping theorem, if a sequence of $d$-dimensional random vectors  $(\mathbf{W}_n)_{n\geq1}$ converges in distribution to $\Sigma^{1/2}\mathbf{Z}$, then, for any continuous function $g:\mathbb{R}^d\rightarrow\mathbb{R}$, $(g(\mathbf{W}_n))_{n\geq1}$ converges in distribution to $g(\Sigma^{1/2}\mathbf{Z})$. In a recent work, \cite{gaunt normal} developed Stein's method \cite{stein} for the problem of obtaining explicit bounds on the distance between the distributions of $g(\mathbf{W}_n)$ and $g(\Sigma^{1/2}\mathbf{Z})$, measured using smooth test functions. 
Henceforth, for ease of notation, we drop the subscript from $\mathbf{W}_n$. 

The basic approach used by \cite{gaunt normal} (a version of which was first used for chi-square approximation by \cite{pickett,reinert 0}) is as follows. Consider the multivariate normal Stein equation \cite{barbour2,goldstein1,gotze} with test function $h(g(\cdot))$:
\begin{equation} \label{mvng} \nabla^\intercal\Sigma\nabla f(\mathbf{w})-\mathbf{w}^\intercal\nabla f(\mathbf{w})=h(g(\mathbf{w}))-\mathbb{E}[h(g(\Sigma^{1/2}\mathbf{Z}))],
\end{equation} 
which has solution
\begin{equation}\label{mvnsolnh}f_h(\mathbf{w})=-\int_{0}^{1}\frac{1}{t}\big\{\mathbb{E}[h(g(t\mathbf{w}+\sqrt{1-t^2}\Sigma^{1/2}\mathbf{Z}))]-\mathbb{E}[h(g(\Sigma^{1/2}\mathbf{Z}))]\big\}\,\mathrm{d}t.
\end{equation}
In the univariate case, with $d=1$ and $\Sigma=1$, the multivariate normal Stein equation (\ref{mvng}) reduces to the standard normal Stein equation \cite{stein}
\begin{equation*}f''(w)-wf'(w)=h(g(w))-\mathbb{E}[h(g(Z))],
\end{equation*}
where $Z\sim N(0,1)$, and the solution is given by
\begin{align}\label{n01}f_h'(w)&=-\mathrm{e}^{w^2/2}\int_w^\infty\{h(t)-\mathbb{E}[h(g(Z))]\}\mathrm{e}^{-t^2/2}\,\mathrm{d}t \\
\label{n02}&=\mathrm{e}^{w^2/2}\int_{-\infty}^w\{h(t)-\mathbb{E}[h(g(Z))]\}\mathrm{e}^{-t^2/2}\,\mathrm{d}t.
\end{align}
The quantity of interest $|\mathbb{E}[h(g(\mathbf{W}))]-\mathbb{E}[h(g(\Sigma^{1/2}\mathbf{Z}))]|$ can now be bounded by bounding the expectation 
\begin{equation}\label{emvn}\mathbb{E}[\nabla^\intercal\Sigma\nabla f_h(\mathbf{W})-\mathbf{W}^\intercal\nabla f_h(\mathbf{W})].
\end{equation}
Taking the supremum of (\ref{emvn}) over all $h$ in some measure determining class of test functions $\mathcal{H}$ yields a bounds on the distance between $g(\mathbf{W})$ and $g(\Sigma^{1/2}\mathbf{Z})$ as measured by the integral probability metric $d_{\mathcal{H}}(\mathbf{W},\Sigma^{1/2}\mathbf{Z}):=\sup_{h\in\mathcal{H}}|\mathbb{E}[h(g(\mathbf{W}))]-\mathbb{E}[h(g(\Sigma^{1/2}\mathbf{Z}))]|$. In Stein's method, the following classes of test functions are often used: 
\begin{align*}\mathcal{H}_{\mathrm{K}}&=\{\mathbf{1}(\cdot\leq z\,:\,z\in\mathbb{R}\}, \\
\mathcal{H}_{\mathrm{W}}&=\{h:\mathbb{R}\rightarrow\mathbb{R}\,:\,\text{$h$ is Lipschitz with $\|h'\|\leq1$}\}, \\
\mathcal{H}_p&=\{h:\mathbb{R}\rightarrow\mathbb{R}\,:\, \text{$h^{(p-1)}$ is Lipschitz with $\|h^{(k)}\|\leq1$, $1\leq k\leq p$}\},
\end{align*}
which induce the Kolmogorov, Wasserstein and smooth Wasserstein ($p\geq1$) distances, denoted by $d_{\mathrm{K}}$, $d_{\mathrm{W}}$ and $d_p$, respectively. As discussed by \cite{bh85}, in theoretical settings the $d_p$ distance is a natural probability metric to work, particularly in the context of quantitative limit theorems with faster convergence rates than the $O(n^{-1/2})$ Berry-Esseen rate.
Here, and throughout the paper, $\|\cdot\|:=\|\cdot\|_\infty$ is the usual supremum norm of a real-valued function. Note that $d_1=d_{\mathrm{W}}$.

One of the main contributions of \cite{gaunt normal} was to obtain suitable bounds for the solution (\ref{mvnsolnh}) of the Stein equation  (\ref{mvng}) that hold for a large class of functions $g:\mathbb{R}^d\rightarrow\mathbb{R}$ (this is necessary in order to obtain good bounds on the quantity (\ref{emvn})). In particular, \cite{gaunt normal} obtained bounds for the case that the derivatives of $g$ have polynomial growth (this covers, for example, chi-square approximation: $g(w)=w^2$). These bounds were used by \cite{gaunt normal} to derive explicit bounds on the distance between the distributions of $g(\mathbf{W})$ and $g(\mathbf{Z})$ for the 
case that $\mathbf{W}$ is a sum of independent random vectors with independent components, that is $\mathbf{W}=(W_1,\ldots,W_d)^\intercal$, where, for $j=1,\ldots,d$, $W_j=n_j^{-1/2}\sum_{i=1}^{n_j}X_{ij}$, and the $X_{ij}$ are independent random variables with zero mean and unit variance. Notably, \cite{gaunt normal} obtained bounds with faster rates of convergence than the $O(n^{-1/2})$ Berry-Esseen rate under additional matching moments between the $X_{ij}$ and the standard normal distribution, and when $g$ is an even function $(g(\mathbf{w})=g(-\mathbf{w})$ for all $\mathbf{w}\in\mathbb{R}^d$).

The aforementioned results of \cite{gaunt normal} seem to have broad applicability, in part because many distributional approximations in probability and statistics assess the distance between the distributions of random variables that can be expressed in the form $g(\mathbf{W})$ and $g(\Sigma^{1/2}\mathbf{Z})$, where $\mathbf{W}$ is close in distribution to $\Sigma^{1/2}\mathbf{Z}$. Indeed, applications include bounds for the chi-square approximation of the likelihood ratio statistic \cite{ar20}, the family of power divergence statistics \cite{gaunt pd} and Friedman's statistic \cite{gr21}, multivariate normal approximation of the maximum likelihood estimator \cite{ag20}, and the bounds for distributional approximation in the delta method \cite{gaunt normal}.

Motivated by the broad applicability of the results of \cite{gaunt normal}, in this paper we improve the results of \cite{gaunt normal} in the form of weaker moment conditions, smaller constants and simpler bounds. Future works (including \cite{fgrs22}) that require results from Stein's method for functions of multivariate normal approximation will reap these benefits. 

In Section \ref{sec2}, we obtain non-uniform bounds on the derivatives of the solution (\ref{mvnsolnh}) of the Stein equation (\ref{mvng}) that have smaller constants than those of \cite{gaunt normal} and improved polynomial growth rate. We achieve these improved bounds through a more focused proof than that used by \cite{gaunt normal} which had derived the bounds for the case of polynomial growth rate $g:\mathbb{R}^d\rightarrow\mathbb{R}$ from a more general framework. We also use the iterative technique of \cite{dgv17} for bounding derivatives of solutions of  Stein equations to obtain bounds on the derivatives of the solution (\ref{mvnsolnh}) in the univariate $d=1$ case (which require weaker differentiability assumptions on $h$ and $g$) that have an optimal $w$-dependence; a crude approach had been used by \cite{gaunt normal} that resulted in bounds with sub-optimal $w$-dependence.

In Section \ref{sec3}, we apply the bounds of Section \ref{sec2} to obtain bounds on the distance between the distributions of $g(\mathbf{W})$ and $g(\mathbf{Z})$, where $\mathbf{W}$ is a sum of independent random vectors with independent components and $\mathbf{Z}$ is a standard $d$-dimensional multivariate normal random vector.  The bounds of Theorem \ref{thm3.2} improve on those of Theorems 3.2--3.5 of \cite{gaunt normal} in terms of smaller constants and weaker moment conditions. In Corollary \ref{simplebds}, we provide simplified bounds without explicit constants.

In Section \ref{sec4}, we provide an application of the general bounds of Section \ref{sec3} to chi-square approximation. We derive explicit bounds for the chi-square approximation of the power divergence family of statistics \cite{cr84} 
 in the case of two cell classifications. The power divergence statistic has a special structure in the case of two cell classifications, which allows us to apply the bounds of Section \ref{sec3}. Our bounds improve on existing results in the literature by holding in stronger probability metrics and having smaller constants, and our Kolmogorov distance bounds have a faster rate of convergence than the only other bounds in the literautre that have the correct dependence on the cell classification probabilities. Moreover, as all our bounds possess an optimal dependence on the cell classification probabilities, we are able to demonstrate the significance of the weaker moment conditions of the bounds of Theorem \ref{thm3.2}, as using the general bounds of \cite{gaunt normal} would lead to bounds with a sub-optimal dependence on these probabilities; 
see Remark \ref{bdfnd}. Finally, some technical lemmas are proved in Appendix \ref{appendix}.

\vspace{3mm}

\noindent{\emph{Notation.}} 
The class $C^n_b(I)$ consists of all functions $h:I\subset\mathbb{R}\rightarrow\mathbb{R}$ for which $h^{(n-1)}$ exists and is absolutely continuous and has bounded derivatives up to the $n$-th order. For a given $P$, the class $C_P^n(\mathbb{R}^d)$ consists of all functions $g:\mathbb{R}^d\rightarrow\mathbb{R}$ such that all $n$-th order partial derivatives of $g$ exist and are such that, for $\mathbf{w}\in\mathbb{R}^d$, 
\begin{equation*}
	\bigg|\frac{\partial ^k}{\prod_{j=1}^{k}\partial w_{i_j}}g(\mathbf{w})\bigg|^{n/k}\leq P(\mathbf{w}), \quad k=1,\ldots,n.
\end{equation*}
We will also consider the weaker class $C_{P,*}^n(\mathbb{R}^d)$, which consists of all functions $g:\mathbb{R}^d\rightarrow\mathbb{R}$ such that all $n$-th order partial derivatives of $g$ exist and are bounded in absolute value by $P(\mathrm{w})$ for all $\mathbf{w}\in\mathbb{R}^d$. 
We will write $h_n=\sum_{k=1}^n{n\brace k}\|h^{(k)}\|$, where ${n\brace k}=(1/k!)\sum_{j=0}^k(-1)^{k-j}\binom{k}{j}j^n$ is a Stirling number of the second kind (see \cite{olver}).  A standard multivariate normal random vector of dimension $d$ will be denoted by $\mathbf{Z}$, and, in the univariate $d=1$ case, $Z$ will denote a standard normal $N(0,1)$ random variable.  Many of our bounds will be expressed in terms of the $r$-th absolute moment of the $N(0,1)$ distribution, which we denote by $\mu_r=2^{r/2}\Gamma((r+1)/2)/\sqrt{\pi}$.

\section{Bounds for the solution of the Stein equation}\label{sec2}

In the following proposition, we provide bounds for the solution (\ref{mvnsolnh}) of the $\mathrm{MVN}_d(\mathbf{0},\Sigma)$ Stein equation (\ref{mvng}) with test function $h(g(\cdot))$, which improve on results of \cite{gaunt normal}.

\begin{proposition}\label{fprop}Let $P(\mathbf{w})=A+B\sum_{i=1}^d|w_i|^{r_i}$, where $r_i\geq 0$, $i=1,\ldots,d$.  Let $\sigma_{i,i}=(\Sigma)_{i,i}$, $i=1,\ldots,d$. Let $f(=f_h)$ denote the solution (\ref{mvnsolnh}).

\vspace{2mm}

\noindent{(i)} Assume that $\Sigma$ is non-negative definite and $h\in C_b^n(\mathbb{R})$ and $g\in C_P^n(\mathbb{R}^d)$ for $n\geq 1$.
 Then, for all $\mathbf{w}\in\mathbb{R}^d$,
		\begin{align}\bigg|\frac{\partial^nf(\mathbf{w})}{\prod_{j=1}^n\partial w_{i_j}}\bigg|&\leq\frac{h_n}{n}\bigg[A+B\sum_{i=1}^d2^{r_i/2}\big(|w_i|^{r_i}+\sigma_{ii}^{r_i/2}\mu_{r_i}\big)\bigg].\label{p2.21}
		\end{align}
		(ii) Now assume that $\Sigma$ is positive definite and $h\in C_b^{n-1}(\mathbb{R})$ and $g\in C_P^{n-1}(\mathbb{R}^d)$ for $n\geq 2$.  Then, for all $\mathbf{w}\in\mathbb{R}^d$,
		\begin{align}
			\bigg|\frac{\partial^{n}f(\mathbf{w})}{\prod_{j=1}^{n}\partial w_{i_j}}\bigg|&\leq \frac{\sqrt{\pi}\Gamma(\frac{n}{2})}{2\Gamma(\frac{n+1}{2})}h_{n-1}\min_{1\leq l\leq d}
			\bigg[A\mathbb{E}|(\Sigma^{-1/2}\mathbf{Z})_l|\nonumber\\
			&\quad+B\sum_{i=1}^d2^{r_i/2}\big(|w_i|^{r_i}\mathbb{E}|(\Sigma^{-1/2}\mathbf{Z})_l|+\mathbb{E}|(\Sigma^{-1/2}\mathbf{Z})_l((\Sigma^{1/2}\mathbf{Z})_i)^{r_i}|\big)\bigg].\label{p2.22}
		\end{align}
		In the case $\Sigma=I_d$, the $d\times d$ identity matrix, we obtain the simplified bound
		\begin{align}	\bigg|\frac{\partial^{n}f(\mathbf{w})}{\prod_{j=1}^{n}\partial w_{i_j}}\bigg|&\leq \frac{\sqrt{\pi}\Gamma(\frac{n}{2})}{2\Gamma(\frac{n+1}{2})}h_{n-1}
			\bigg[A+B\sum_{i=1}^d2^{r_i/2}\big(|w_i|^{r_i}+\mu_{r_i+1}\big)\bigg].\label{p2.23}
		\end{align}
(iii) Finally, consider the case $d=1$ with $\Sigma=1$. Assume that $h\in C_b^{n-2}(\mathbb{R})$ and $g\in C_P^{n-2}(\mathbb{R})$, where $n\geq 3$ and $P(w)=A+B|w|^r$, $r\geq0$.  Then, for all $w\in\mathbb{R}$,
		\begin{align}|f^{(n)}(w)|\leq h_{n-2}\big[\alpha_rA+2^{r/2}B\big(\beta_r|w|^r+\gamma_r\big)\big],\label{p2.24}
		\end{align}
where $(\alpha_r,\beta_r,\gamma_r)=(4,4,2\mu_r)$ if $0\leq r\leq1$, and	$(\alpha_r,\beta_r,\gamma_r)=(r+3,r+5,(r+1)\mu_{r+1})$ if $r>1$.	

\vspace{2mm}

\noindent{(iv)} If $h(w)=w$ for all $w\in\mathbb{R}$, then the inequalities in parts (i), (ii) and (iii) hold for $g$ in the classes $C_{P,*}^n(\mathbb{R}^d)$, $C_{P,*}^{n-1}(\mathbb{R}^d)$ and $C_{P,*}^{n-2}(\mathbb{R})$, respectively. Moreover, under these assumptions, the polynomial growth rate in the bounds (\ref{p2.21})--(\ref{p2.24}) is optimal.
\end{proposition}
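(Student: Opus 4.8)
\emph{Overall strategy.} Since $\mathbb{E}[h(g(\Sigma^{1/2}\mathbf{Z}))]$ is constant, I would differentiate the representation \eqref{mvnsolnh} under the integral sign. Because the argument $t\mathbf{w}+\sqrt{1-t^2}\Sigma^{1/2}\mathbf{Z}$ is affine in $\mathbf{w}$ with $\partial (t\mathbf{w}+\sqrt{1-t^2}\Sigma^{1/2}\mathbf{Z})_i/\partial w_{i_j}=t\delta_{i,i_j}$, each differentiation produces a factor $t$, so the weight $1/t$ becomes $t^{n-1}$ after $n$ differentiations. The two recurring tools are: (a) the multivariate Faà di Bruno formula, which writes $\partial^n(h\circ g)/\prod\partial x_{i_j}$ as a sum over set partitions $\pi$ of $\{1,\dots,n\}$ of $h^{(|\pi|)}(g)\prod_{B\in\pi}\partial^{|B|}g$; bounding $|h^{(|\pi|)}(g)|\le\|h^{(|\pi|)}\|$, using $g\in C_P^n$ in the form $|\partial^{|B|}g|\le P^{|B|/n}$, noting $\prod_{B}P^{|B|/n}=P$, and that there are ${n\brace k}$ partitions into $k$ blocks, yields the clean bound $h_n\,P$; and (b) for a shifted Gaussian, the elementary inequality $tx+\sqrt{1-t^2}y\le(t+\sqrt{1-t^2})\max(x,y)$ together with $\max(x,y)^r\le x^r+y^r$ and $(t+\sqrt{1-t^2})^r\le 2^{r/2}$. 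For part (i), applying (a) under the integral and then (b) coordinatewise to $\mathbb{E}[P(t\mathbf{w}+\sqrt{1-t^2}\Sigma^{1/2}\mathbf{Z})]$, using $(\Sigma^{1/2}\mathbf{Z})_i\sim N(0,\sigma_{ii})$ so that $\mathbb{E}|(\Sigma^{1/2}\mathbf{Z})_i|^{r_i}=\sigma_{ii}^{r_i/2}\mu_{r_i}$, leaves only $\int_0^1 t^{n-1}\,\mathrm{d}t=1/n$, giving \eqref{p2.21}. No inversion of $\Sigma$ appears, so $\Sigma$ may be merely non-negative definite.

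\emph{Part (ii).} The aim is to spend only $n-1$ derivatives on the composition. I would differentiate \eqref{mvnsolnh} only $n-1$ times, reaching $\Psi:=\partial^{n-1}(h\circ g)/\prod_{j\ge2}\partial x_{i_j}$ (bounded by $h_{n-1}P$ as above), and convert the final $\partial/\partial w_{i_1}$ into a Gaussian weight. Writing the inner expectation against the $N(\mathbf{0},\Sigma)$ density, changing variables so that $\mathbf{w}$ enters only through the density, and using $\nabla\phi_\Sigma(\mathbf{y})=-\Sigma^{-1}\mathbf{y}\,\phi_\Sigma(\mathbf{y})$ replaces the derivative by the factor $(1-t^2)^{-1/2}(\Sigma^{-1}\mathbf{Y})_l=(1-t^2)^{-1/2}(\Sigma^{-1/2}\mathbf{Z})_l$, where $\mathbf{Y}=\Sigma^{1/2}\mathbf{Z}$ and $l$ is the direction of the integration by parts. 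Applying tool (b) to the cross expectations $\mathbb{E}[|tw_i+\sqrt{1-t^2}Y_i|^{r_i}|(\Sigma^{-1/2}\mathbf{Z})_l|]$ produces exactly $|w_i|^{r_i}\mathbb{E}|(\Sigma^{-1/2}\mathbf{Z})_l|+\mathbb{E}|(\Sigma^{-1/2}\mathbf{Z})_l((\Sigma^{1/2}\mathbf{Z})_i)^{r_i}|$, and the remaining integral is the Wallis integral
\[
\int_0^1\frac{t^{n-1}}{\sqrt{1-t^2}}\,\mathrm{d}t=\frac{\sqrt{\pi}\,\Gamma(n/2)}{2\Gamma((n+1)/2)}.
\]
Minimising over $l$ gives \eqref{p2.22}; this requires $\Sigma$ positive definite so that $\Sigma^{-1/2}$ exists. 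For $\Sigma=I_d$ one has $(\Sigma^{-1/2}\mathbf{Z})_l=Z_l$, and the simplification \eqref{p2.23} follows from $\mu_1\le1$ and the correlation inequality $\mu_1\mu_{r_i}=\mathbb{E}|Z|\,\mathbb{E}|Z|^{r_i}\le\mathbb{E}|Z|^{r_i+1}=\mu_{r_i+1}$, both $|Z|$ and $|Z|^{r_i}$ being increasing in $|Z|$.

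\emph{Part (iii): the main obstacle.} Here I would abandon direct differentiation of \eqref{mvnsolnh} and instead differentiate the Stein equation \eqref{mvng} (with $d=1$, $\Sigma=1$) a total of $n-2$ times, which by Leibniz gives the recursion
\[
f^{(n)}(w)=wf^{(n-1)}(w)+(n-2)f^{(n-2)}(w)+(h\circ g)^{(n-2)}(w),
\]
so that only $(h\circ g)^{(n-2)}$ — hence only $h\in C_b^{n-2}$ and $g\in C_P^{n-2}$ — enters, with the relevant norms controlled by $h_{n-2}P$. The difficulty is that naively estimating the $wf^{(n-1)}$ term inflates the power of $w$; the optimal $w$-dependence is recovered by the iterative scheme of \cite{dgv17}, which solves the first-order linear ODE underlying the recursion using the two explicit representations \eqref{n01}--\eqref{n02} and, for each $w$, selects the representation (over $(w,\infty)$ or $(-\infty,w)$) whose Gaussian tilting $\mathrm{e}^{w^2/2}\int\mathrm{e}^{-s^2/2}\,\mathrm{d}s$ decays, thereby cancelling the spurious high powers and leaving growth exactly $|w|^r$. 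Extracting the explicit $(\alpha_r,\beta_r,\gamma_r)$ then reduces to estimating integrals of the form $\mathrm{e}^{w^2/2}\int_w^\infty|s|^r\mathrm{e}^{-s^2/2}\,\mathrm{d}s$, where the split $0\le r\le1$ versus $r>1$ again mirrors tool (b), using subadditivity of $s\mapsto|s|^r$ in the first regime and a coarser bound in the second. I expect this to be the most delicate step, both in organising the induction so that the bound on $f^{(n)}$ feeds correctly from those on $f^{(n-1)},f^{(n-2)}$ and in tracking the constants.

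\emph{Part (iv).} When $h(w)=w$ the composition collapses, $h\circ g=g$, so in every step above the Faà di Bruno sum reduces to its single top term: only the highest partial derivative of $g$ appears and need only be bounded by $P$ — precisely the weaker classes $C_{P,*}^{n}$, $C_{P,*}^{n-1}$, $C_{P,*}^{n-2}$ — while $h_n=h_{n-1}=h_{n-2}=1$. For optimality, I would exhibit explicit $g$ saturating the growth: in \eqref{p2.21} the representation $\partial^n f(\mathbf{w})=-\int_0^1 t^{n-1}\mathbb{E}[(\partial^n g)(t\mathbf{w}+\sqrt{1-t^2}\Sigma^{1/2}\mathbf{Z})]\,\mathrm{d}t$ with $\partial^n g(\mathbf{x})=|x_i|^{r_i}$ grows like $|w_i|^{r_i}$ as $|w_i|\to\infty$, and for \eqref{p2.24} a non-polynomial $g$ with $g^{(n-2)}(w)\propto|w|^{r}$, inserted into the representation, leaves a surviving term of exact order $|w|^r$ once the leading cancellation in the recursion is accounted for. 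In each case the exponent cannot be lowered, which is the asserted optimality.
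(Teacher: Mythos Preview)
Your proposal is essentially correct and follows the same route as the paper: parts (i) and (ii) quote the integral inequalities (2.2)--(2.3) of \cite{gaunt normal} (which are exactly the Fa\`a di Bruno plus Gaussian integration-by-parts you describe), and part (iii) uses the iterative technique of \cite{dgv17} via the recursion $f^{(n)}=wf^{(n-1)}+(n-2)f^{(n-2)}+(h\circ g)^{(n-2)}$ together with the integral representations \eqref{n01}--\eqref{n02} for $f^{(n-1)}$.

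Two refinements to your part (iii) sketch. First, it is not an induction in $n$: the bounds on $f^{(n-2)}$ and $f^{(n-1)}$ that you need come directly from parts (i) and (ii) (both of which require only $h\in C_b^{n-2}$ and yield the factor $h_{n-2}$), not from a lower-$n$ instance of (iii). Second, and more substantively, your treatment of the regime $r>1$ is incomplete. The key estimate is on $T_r(w)=w\mathrm{e}^{w^2/2}\int_w^\infty t^r\mathrm{e}^{-t^2/2}\,\mathrm{d}t$; for $0\le r\le1$ one has $T_r(w)\le w^r$ for all $w>0$, but for $r>1$ the analogous bound $T_r(w)\le 2w^r$ (from an incomplete gamma function inequality) holds only for $w>r-1$. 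The paper therefore splits the case $r>1$ further: for $|w|>r-1$ it uses this $T_r$ bound inside the integral representation, while for $|w|\le r-1$ it simply writes $|wf^{(n-1)}(w)|\le(r-1)|f^{(n-1)}(w)|$ and invokes the already-proved bound \eqref{p2.23}. This second split is what produces the additive structure of the constants $(\alpha_r,\beta_r,\gamma_r)$ for $r>1$ and is the step you have not anticipated; without it the argument does not close for $|w|$ small.
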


In the following proposition, we obtain bounds for $\psi_m$, the solution of the Stein equation
\begin{equation}\label{mvnpsi}
	\nabla^\intercal\Sigma\nabla \psi_m(\mathbf{w})-\mathbf{w}^\intercal\nabla \psi_m(\mathbf{w})=\frac{\partial^m f(\mathbf{w})}{\prod_{j=1}^{m}\partial w_{i_j}},
\end{equation}
where $f(=f_h)$ is the solution (\ref{mvnsolnh}). Again, the bounds improve on results of \cite{gaunt normal}. Here, and throughout this section, we suppress in the notation the dependence of the solution $\psi_m$ on the components with respect to which $f$ has been differentiated; we do this for ease of notation and because our bounds for $\psi_m$ do not depend themselves on which components $f$ has been differentiated with respect to. The Stein equation (\ref{mvnpsi}) arises in the proof of parts (iii) and (iv) of Theorem \ref{thm3.2}, in which faster convergence rates for the distributional approximation of $g(\mathbf{W})$ by $g(\mathbf{Z})$ are achieved when $g:\mathbb{R}^d\rightarrow\mathbb{R}$ is an even function. 

\begin{proposition}\label{psiprop}Let $P(\mathbf{w})=A+B\sum_{i=1}^d|w_i|^{r_i}$, where $r_i\geq 0$, $i=1,\ldots,d$.  Let $\sigma_{i,i}=(\Sigma)_{i,i}$, $i=1,\ldots,d$.  
\vspace{2mm}

\noindent{(i)} Assume that $\Sigma$ is non-negative definite and $h\in C_b^{m+n}(\mathbb{R})$ and $g\in C_P^{m+n}(\mathbb{R}^d)$ for $m,n\geq 1$.  Then, for all $\mathbf{w}\in\mathbb{R}^d$,
	\begin{align}\bigg|\frac{\partial^n\psi_m(\mathbf{w})}{\prod_{j=1}^n\partial w_{i_j}}\bigg|&\leq\frac{h_{m+n}}{n(m+n)}\bigg[A+B\sum_{i=1}^d3^{r_i/2}\big(|w_i|^{r_i}+2\sigma_{ii}^{r_i/2}\mu_{r_i}\big)\bigg].\label{p2.31}
	\end{align}
(ii)	Now assume that $\Sigma$ is positive definite and $h\in C_b^{m+n-2}(\mathbb{R})$ and $g\in C_P^{m+n-2}(\mathbb{R}^d)$ for $m,n\geq 1$ and $m+n\geq 3$.  Then, for all $\mathbf{w}\in\mathbb{R}^d$,
	\begin{align}
		\bigg|\frac{\partial^{n}\psi_{m}(\mathbf{w})}{\prod_{j=1}^{n}\partial w_{i_j}}\bigg|&\leq \frac{\pi\Gamma(\frac{n}{2})\Gamma(\frac{m+n}{2})}{4\Gamma(\frac{n+1}{2})\Gamma(\frac{m+n+1}{2})}h_{m+n-2}\min_{1\leq k,l\leq d}\bigg[A\mathbb{E}|(\Sigma^{-1/2}\mathbf{Z})_k|\mathbb{E}|(\Sigma^{-1/2}\mathbf{Z})_l|\nonumber\\
		&\quad+B\sum_{i=1}^d3^{r_i/2}\big(|w_i|^{r_i}\mathbb{E}|(\Sigma^{-1/2}\mathbf{Z})_l|\mathbb{E}|(\Sigma^{-1/2}\mathbf{Z})_k|\nonumber\\
		&\quad+2\mathbb{E}|(\Sigma^{-1/2}\mathbf{Z})_k|\mathbb{E}|(\Sigma^{-1/2}\mathbf{Z})_l((\Sigma^{1/2}\mathbf{Z})_i)^{r_i}|\big)\bigg].\label{p2.32}
	\end{align}
 In the case $\Sigma=I_d$, we obtain the simplified bound
	\begin{equation}\label{p2.33} \bigg|\frac{\partial^{n}\psi_{m}(\mathbf{w})}{\prod_{j=1}^{n}\partial w_{i_j}}\bigg|\leq \frac{\sqrt{2\pi}\Gamma(\frac{n}{2})\Gamma(\frac{m+n}{2})}{4\Gamma(\frac{n+1}{2})\Gamma(\frac{m+n+1}{2})}h_{m+n-2}\bigg[A+B\sum_{i=1}^d3^{r_i/2}\big(|w_i|^{r_i}+2\mu_{r_i+1}\big)\bigg].
	\end{equation}
(iii)	Finally, consider the case $d=1$ with $\Sigma=1$.  Assume that $h\in C_b^{m-1}(\mathbb{R})$ and $g\in C_P^{m-1}(\mathbb{R})$, where $m\geq 2$ and $P(w)=A+B|w|^r$, $r\geq0$.  Then, for all $w\in\mathbb{R}$,
	\begin{align}|\psi_m^{(3)}(w)|\leq h_{m-1}\big[\tilde\alpha_r A+ 3^{r/2}B\big(\tilde\beta_r|w|^{r}+\tilde\gamma_r\big)\big],\label{p2.34}
	\end{align}
where $(\tilde\alpha_r,\tilde\beta_r,\tilde\gamma_r)=(10,10,10\mu_{r+1})$ if $0\leq r\leq1$, and $(\tilde\alpha_r,\tilde\beta_r,\tilde\gamma_r)=(r^2+r+8,r^2+2r+18,(2r^2+r+5)\mu_{r+1})$ if $r>1$.	

\vspace{2mm}

\noindent{(iv)} If $h(w)=w$ for all $w\in\mathbb{R}$, then the inequalities in parts (i), (ii) and (iii) hold for $g$ in the classes $C_{P,*}^{m+n}(\mathbb{R}^d)$, $C_{P,*}^{m+n-2}(\mathbb{R}^d)$ and $C_{P,*}^{m-1}(\mathbb{R})$, respectively. Moreover, under these assumptions, the polynomial growth rate in the bounds (\ref{p2.31})--(\ref{p2.34}) is optimal.
\end{proposition}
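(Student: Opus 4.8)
The plan is to exploit the fact that $\psi_m$ is itself the solution of an $\mathrm{MVN}_d(\mathbf{0},\Sigma)$ Stein equation, namely (\ref{mvnpsi}), whose right-hand side is $\partial^m f/\prod_{j=1}^m\partial w_{i_j}$. Writing down the solution of (\ref{mvnpsi}) in the form analogous to (\ref{mvnsolnh}) (subtracting the mean over $\Sigma^{1/2}\mathbf{Z}$, which makes solvability automatic and is immaterial since every bound in the proposition is on a derivative of order $n\geq1$), and then substituting the integral representation of $\partial^m f$ obtained by differentiating (\ref{mvnsolnh}) $m$ times under the integral sign, I obtain a double-integral representation of $\psi_m$ in two auxiliary parameters $s,t\in(0,1)$, with independent copies $\mathbf{Z},\mathbf{Z}'$ of the standard normal vector. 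The argument inside $h\circ g$ is then $ts\,w_i + t\sqrt{1-s^2}(\Sigma^{1/2}\mathbf{Z}')_i + \sqrt{1-t^2}(\Sigma^{1/2}\mathbf{Z})_i$, whose three-term form (against the two terms in the proof of Proposition \ref{fprop}) is the source of the constant $3^{r_i/2}$: the three coefficients satisfy $ts+t\sqrt{1-s^2}+\sqrt{1-t^2}\leq\sqrt{3}$ by the QM--AM inequality, since their squares sum to $1$. The whole argument runs parallel to the proof of Proposition \ref{fprop}.

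For part (i) I would differentiate the double-integral representation $n$ times in the $w_{i_j}$. Since $\mathbf{w}$ enters the argument only through the factor $ts$, the chain rule brings down $(ts)^n$, which combined with the weight $t^{m-1}$ (from the $m$-fold differentiation of $f$) and $1/s$ (from the $\psi_m$ solution formula) gives the weight $t^{m+n-1}s^{n-1}$. The integrand now carries the $(m+n)$-th order partial derivative of $h\circ g$, which I bound by $h_{m+n}\,P(\cdot)$ via Fa\`a di Bruno's formula together with the defining inequalities of $C_P^{m+n}(\mathbb{R}^d)$ (the Stirling numbers ${m+n\brace k}$ in $h_{m+n}$ counting the partitions). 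Inserting $P(\mathbf{w})=A+B\sum_i|w_i|^{r_i}$, taking expectations over $\mathbf{Z},\mathbf{Z}'$ (each Gaussian term contributes a moment $\sigma_{ii}^{r_i/2}\mu_{r_i}$, which is where the factor $2$ in (\ref{p2.31}) originates), and using $ts+t\sqrt{1-s^2}+\sqrt{1-t^2}\leq\sqrt3$ together with the normalisation $n(m+n)\int_0^1\!\int_0^1 t^{m+n-1}s^{n-1}\,\mathrm{d}s\,\mathrm{d}t=1$ yields (\ref{p2.31}), the prefactor $1/(n(m+n))$ being the reciprocal of the normalising constant.

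Parts (ii) and (iv) are refinements of this scheme. For (ii), positive-definiteness of $\Sigma$ lets me trade two of the derivatives on $h\circ g$ for Gaussian integration by parts (Stein's identity): one derivative in each of the $s$- and $t$-integrals is moved onto the Gaussian density, each time turning a factor $t/\sqrt{1-t^2}$ (resp.\ $s/\sqrt{1-s^2}$) into multiplication by a linear functional $(\Sigma^{-1/2}\mathbf{Z})_l$ or $(\Sigma^{-1/2}\mathbf{Z}')_k$. This lowers the smoothness requirement from $m+n$ to $m+n-2$, produces the $\Gamma$-ratio prefactor of (\ref{p2.32}) through the integrals $\int_0^1 t^{\,\cdot-1}(1-t^2)^{-1/2}\,\mathrm{d}t$, and yields the factors $\mathbb{E}|(\Sigma^{-1/2}\mathbf{Z})_k|\,\mathbb{E}|(\Sigma^{-1/2}\mathbf{Z})_l|$; the $\min_{k,l}$ records the freedom in choosing the two integration-by-parts directions, and specialising to $\Sigma=I_d$ (so that $\mathbb{E}|Z|=\mu_1$) gives (\ref{p2.33}). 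For (iv), putting $h(w)=w$ collapses Fa\`a di Bruno to its single leading term $h'(g)\,\partial^{m+n}g=\partial^{m+n}g$, so only the top-order derivative of $g$ is needed and the weaker class $C_{P,*}$ suffices (with every $h$-factor equal to $1$); optimality of the polynomial growth would be shown by exhibiting monomial choices of $g$ for which the stated $|w_i|^{r_i}$ rate is attained.

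The main obstacle is part (iii). Differentiating the univariate Stein equation $\psi_m''-w\psi_m'=f^{(m)}$ gives $\psi_m^{(3)}=(1+w^2)\psi_m'+wf^{(m)}+f^{(m+1)}$, and the requirement $h\in C_b^{m-1}$, $g\in C_P^{m-1}$ is exactly the smoothness needed to bound $f^{(m)}$ and $f^{(m+1)}$ through Proposition \ref{fprop}(iii). The difficulty is the $w$-dependence: bounding $(1+w^2)\psi_m'$ and $wf^{(m)}$ separately yields spurious growth of order $|w|^{r+1}$ (the one-dimensional solution formula (\ref{n01})--(\ref{n02}) with right-hand side $f^{(m)}$ and a Mills-ratio estimate $\mathrm{e}^{w^2/2}\int_w^\infty \mathrm{e}^{-t^2/2}\,\mathrm{d}t\leq\min\{\sqrt{\pi/2},1/w\}$ give $|\psi_m'(w)|\sim|w|^{r-1}$, whence $(1+w^2)|\psi_m'|\sim|w|^{r+1}$, and likewise $|wf^{(m)}|\sim|w|^{r+1}$). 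The optimal rate $|w|^r$ of (\ref{p2.34}) is recovered only by exploiting the cancellation of these two leading contributions, which is forced by the relation $f^{(m)}=\psi_m''-w\psi_m'$; this is precisely what the iterative technique of \cite{dgv17} is designed to capture. Carrying the cancellation through while tracking the moment terms, and separating the cases $0\leq r\leq1$ (where $x\mapsto|x|^r$ is subadditive) and $r>1$ (where convexity introduces the factors linear and quadratic in $r$), is the delicate bookkeeping that produces the explicit constants $(\tilde\alpha_r,\tilde\beta_r,\tilde\gamma_r)$.
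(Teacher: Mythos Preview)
Your plan for parts (i), (ii) and (iv) matches the paper's argument essentially verbatim: the paper cites the double-integral bounds from Lemma 2.4 of \cite{gaunt normal}, which are precisely the representations you derive, and then inserts $P(\mathbf{w})=A+B\sum_i|w_i|^{r_i}$ using $st+t\sqrt{1-s^2}+\sqrt{1-t^2}\leq\sqrt3$ (your QM--AM justification via $(ts)^2+(t\sqrt{1-s^2})^2+(1-t^2)=1$ is neater than the paper's ``basic calculus'').

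For part (iii) you have correctly located the obstacle but your description of how to overcome it is slightly off the mark. The paper does \emph{not} track a cancellation between $(1+w^2)\psi_m'$ and $wf^{(m)}$ at the level of asymptotics. Instead it works with the equivalent decomposition
\[
|\psi_m^{(3)}(w)|\leq|w\psi_m''(w)|+|\psi_m'(w)|+|f^{(m+1)}(w)|,
\]
i.e.\ it keeps your two ``bad'' terms bundled together as $w\psi_m''$. The key observation --- and this is the iterative step of \cite{dgv17} --- is that differentiating $L\psi_m=f^{(m)}$ gives $L\psi_m'=f^{(m+1)}+\psi_m'$, so $\psi_m''$ itself admits the solution formula (\ref{n01})--(\ref{n02}) with right-hand side $f^{(m+1)}+\psi_m'$:
\[
\psi_m''(w)=-\mathrm{e}^{w^2/2}\int_w^\infty\bigl[f^{(m+1)}(t)+\psi_m'(t)\bigr]\mathrm{e}^{-t^2/2}\,\mathrm{d}t.
\]
Since $f^{(m+1)}$ and $\psi_m'$ are each already $O(|t|^r)$ (by Proposition \ref{fprop}(iii) and by (\ref{p2.33}) respectively, both available under $g\in C_P^{m-1}$), the Mills-ratio bound $T_r(w)\lesssim w^r$ of Lemma \ref{lem3} gives $|w\psi_m''(w)|\lesssim|w|^r$ directly for $|w|>r-1$. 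For $|w|\leq r-1$ the paper falls back on your substitution $w\psi_m''=w^2\psi_m'+wf^{(m)}$ and bounds each factor trivially by $(r-1)^2$ and $(r-1)$. So the ``cancellation'' you anticipate is realised not by subtracting leading terms, but by reassembling them into $w\psi_m''$ and invoking the integral representation one level up --- with a right-hand side that is already tame. That reassembly is the concrete content of the \cite{dgv17} iteration that your proposal leaves implicit.
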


\begin{remark}The following two-sided inequalities are helpful in understanding the behaviour of the bounds (\ref{p2.22}) and (\ref{p2.23}) of Proposition \ref{fprop} and the bounds (\ref{p2.32}) and (\ref{p2.33}) of Proposition \ref{psiprop} for large $m$ and $n$:   
\begin{equation}\label{gamma1}\frac{\sqrt{2}}{\sqrt{n}}<\frac{\Gamma(\frac{n}{2})}{\Gamma(\frac{n+1}{2})}<\frac{\sqrt{2}}{\sqrt{n-1/2}}, \quad n\geq 1,
\end{equation}
and
\[\frac{2}{\sqrt{n(m+n)}}<\frac{\Gamma(\frac{n}{2})\Gamma(\frac{m+n}{2})}{\Gamma(\frac{n+1}{2})\Gamma(\frac{m+n+1}{2})}<\frac{2}{\sqrt{(n-1/2)(m+n-1/2)}}, \quad m,n\geq 1.\]
Here we used the inequalities $\frac{\Gamma(x+1/2)}{\Gamma(x+1)}>(x+1/2)^{-1/2}$ for $x>0$ (see the proof of Corollary 3.4 of \cite{gaunt inequality}) and $\frac{\Gamma(x+1/2)}{\Gamma(x+1)}<(x+1/4)^{-1/2}$ for $x>-1/4$ (see \cite{elezovic}).
\end{remark}

\begin{remark}
1. Inequalities (\ref{p2.21})--(\ref{p2.23}) of Propostion \ref{fprop} and inequalities (\ref{p2.31})--(\ref{p2.33}) of Propostion \ref{psiprop} are sharper than the corresponding bounds given in Corollaries 2.2 and 2.3 of \cite{gaunt normal} through improved dependence on the constants $r_1,\ldots,r_d$ and $m$ and $n$. We remark that further more accurate bounds can be given; for example, a slight modification of the proof of inequality (\ref{p2.21}) leads to the improved bound
\begin{align*}\bigg|\frac{\partial^nf(\mathbf{w})}{\prod_{j=1}^n\partial w_{i_j}}\bigg|&\leq\frac{h_n}{n}\bigg[A+B\sum_{i=1}^dI_{n,r_i}\big(|w_i|^{r_i}+\sigma_{ii}^{r_i/2}\mu_{r_i}\big)\bigg], \quad \mathbf{w}\in\mathbb{R}^d,
		\end{align*}
where $I_{n,r}=n\int_{0}^{1}t^{n-1}(t+\sqrt{1-t^2})^r\,\mathrm{d}t$, and it is clear that $I_{n,r}\leq 2^{r/2}$. This integral cannot in general be expressed in terms of elementary functions, but for given $n$ and $r$ can be evaluated by computational algebra packages. In specific applications in which $n$ and $r_1,\ldots,r_d$ are known, this bound could be applied to improve constants. Similarly, the constant $2^{r_i/2}$ in the bounds (\ref{p2.22}) and (\ref{p2.23}) can be improved by replacing $2^{r_i/2}$ by $J_{n,r_i}=\frac{2\Gamma{((n+1)/2)}}{\sqrt{\pi}\Gamma{(n/2)}}\int_{0}^{1}\frac{t^{n-1}}{\sqrt{1-t^2}}(t+\sqrt{1-t^2})^{r_i}\,\mathrm{d}t$, and the factor $3^{r_i/2}$ in inequality (\ref{p2.31})
 can be improved to $I_{m,n,r_i}=n(m+n)\int_{0}^{1}\int_{0}^{1}t^{m+n-1}s^{n-1}(st+t\sqrt{1-s^2}+\sqrt{1-t^2})^{r_i}\,\mathrm{d}s\,\mathrm{d}t$, whilst the factor $3^{r_i/2}$ in inequalities (\ref{p2.32}) and (\ref{p2.33}) can be improved to $J_{m,n,r_i}=\frac{4\Gamma((n+1)/2)\Gamma((m+n+1)/2)}{\pi\Gamma(n/2)\Gamma((m+n)/2)}\int_{0}^{1}\int_{0}^{1}\frac{t^{m+n-1}}{\sqrt{1-t^2}}\frac{s^{n-1}}{\sqrt{1-s^2}}(st+t\sqrt{1-s^2}+\sqrt{1-t^2})^{r_i}\,\mathrm{d}s\,\mathrm{d}t$. For our purposes, we find it easier to work with the more explicit bounds stated in the propositions, particularly as these inequalities are used to derive inequalities (\ref{p2.24}) and (\ref{p2.34}) leading to a more efficient proof and simpler constants in the final bounds.
 
 \vspace{2mm}
 
\noindent{2.} Inequality (\ref{p2.24}) of Proposition \ref{fprop} and inequality (\ref{p2.34}) of Proposition \ref{psiprop} have a theoretically improved dependence on $r$ than the corresponding bounds of \cite{gaunt normal} in that for large $r$ they are of a smaller asymptotic order; however, for small $r$ they may be numerically larger. More significantly, inequality (\ref{p2.24}) of Proposition \ref{fprop} improves on the corresponding bound of Corollary 2.2 of \cite{gaunt normal} by improving the $w$-dependence of the bound from $|w|^{r+1}$ to $|w|^r$, whilst inequality (\ref{p2.24}) of Proposition \ref{psiprop} improves on the corresponding bound of Corollary 2.3 of \cite{gaunt normal} by improving the $w$-dependence of the bound from $|w|^{r+2}$ to $|w|^r$. This improved $w$-dependence allows us to impose weaker moment conditions in our general bounds in Theorem \ref{thm3.2}. 
\end{remark}

In proving Propositions \ref{fprop} and \ref{psiprop}, we will make use of the following simple lemmas. The proofs are given in Appendix \ref{appendix}.

\begin{lemma}\label{lem2} Suppose $a,b,c,x,y,z\geq0$ and $r\geq0$. Then
	\begin{align}
	\label{abineq}	(ax+by)^r&\leq(a+b)^r(x^r+y^r),\\
	\label{abcineq}	(ax+by+cz)^r&\leq(a+b+c)^r(x^r+y^r+z^r).
	\end{align}
\end{lemma}

\begin{lemma}\label{lem3}Let $T_r(w)=w\mathrm{e}^{w^2/2}\int_{w}^{\infty}t^r\mathrm{e}^{-t^2/2}\,\mathrm{d}t$.

\vspace{2mm}
	
\noindent{1.} Suppose $0\leq r\leq 1$. Then, for $w>0$, 
	\begin{equation}
		T_r(w)\leq w^r.\label{Tr1}
	\end{equation}

\noindent{2.} Suppose $r>1$. Then, for $w>r-1$, 
\begin{equation}
	T_r(w)\leq2w^r.\label{Tr2}
\end{equation}
\end{lemma}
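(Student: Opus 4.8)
The plan is to deduce both parts from the single integration-by-parts identity
\[
T_r(w)=w^r+(r-1)T_{r-2}(w),\qquad w>0,
\]
which follows by writing $t^r\mathrm{e}^{-t^2/2}=t^{r-1}\cdot t\mathrm{e}^{-t^2/2}$ and integrating by parts: the boundary term at infinity vanishes, the term at $w$ equals $w^{r-1}\mathrm{e}^{-w^2/2}$, and multiplication by $w\mathrm{e}^{w^2/2}$ turns this into $w^r$. Alongside it I would record the elementary monotone estimate underlying part 1: for any $s\le1$ and $t\ge w>0$ one has $t^{s-1}\le w^{s-1}$, so
\[
\int_w^\infty t^s\mathrm{e}^{-t^2/2}\,\mathrm{d}t=\int_w^\infty t^{s-1}\cdot t\mathrm{e}^{-t^2/2}\,\mathrm{d}t\le w^{s-1}\int_w^\infty t\mathrm{e}^{-t^2/2}\,\mathrm{d}t=w^{s-1}\mathrm{e}^{-w^2/2},
\]
and hence $T_s(w)\le w^s$. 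Taking $s=r$ is exactly part 1 (the case $0\le r\le1$).

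For part 2 I would first dispose of $r\ge2$. Combining the identity with $T_{r-2}(w)\le w^{r-2}$ (the estimate above with $s=r-2\le1$) gives $T_r(w)\le w^r+(r-1)w^{r-2}$, which is at most $2w^r$ as soon as $w^2\ge r-1$; and when $r\ge2$ the hypothesis $w>r-1\ge1$ already forces $w^2>w>r-1$. (For $r>3$, where the one-step bound on $T_{r-2}$ is unavailable, I would instead use the comparison function below, noting that $r-1\ge\sqrt{2(r-1)}$ there.) The same computation shows the bound also holds for $1<r<2$ provided $w\ge\sqrt{r-1}$.

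This leaves the genuinely delicate case $1<r<2$ with $r-1<w<\sqrt{r-1}$, where $w^2<r-1$ and the crude reduction overshoots $2w^r$ by a bounded factor. Here I would argue with the comparison function
\[
D(w)=2w^{r-1}\mathrm{e}^{-w^2/2}-\int_w^\infty t^r\mathrm{e}^{-t^2/2}\,\mathrm{d}t,
\]
whose nonnegativity is precisely the assertion of part 2. A direct computation gives $D'(w)=w^{r-2}\big(2(r-1)-w^2\big)\mathrm{e}^{-w^2/2}$, so $D$ increases on $(0,\sqrt{2(r-1)})$ and decreases on $(\sqrt{2(r-1)},\infty)$; since $D(w)\to0$ as $w\to\infty$ while $D(0^+)<0$, the function $D$ has a single sign change, at some $w_1<\sqrt{2(r-1)}$, and is nonnegative exactly on $[w_1,\infty)$. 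Thus $D\ge0$ on $(r-1,\infty)$ is equivalent to the single boundary inequality $D(r-1)\ge0$, i.e.
\[
\int_{r-1}^\infty t^r\mathrm{e}^{-t^2/2}\,\mathrm{d}t\le2(r-1)^{r-1}\mathrm{e}^{-(r-1)^2/2},
\]
which after one integration by parts reduces to $\int_{a}^\infty t^{a-1}\mathrm{e}^{-t^2/2}\,\mathrm{d}t\le a^{a-1}\mathrm{e}^{-a^2/2}$ with $a=r-1\in(0,1)$.

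The main obstacle is exactly this last inequality. The monotone estimate only yields $\int_a^\infty t^{a-1}\mathrm{e}^{-t^2/2}\,\mathrm{d}t\le a^{a-2}\mathrm{e}^{-a^2/2}$, which is off by the factor $1/a$ and therefore insufficient; the same loss appears in every crude tail or incomplete-gamma bound I have tried, so the factor $2$ in part 2 really must be earned in this regime. To close the gap I expect to analyse the single-variable function $a\mapsto a^{a-1}\mathrm{e}^{-a^2/2}-\int_a^\infty t^{a-1}\mathrm{e}^{-t^2/2}\,\mathrm{d}t$ on $(0,1)$ directly, either by tracking the sign of its derivative or by splitting the integral at $t=1$ and estimating the two pieces separately, and this verification is where the real work of part 2 lies.
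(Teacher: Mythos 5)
Your route is genuinely different from the paper's: the paper substitutes $u=t^2/2$ to write $T_r(w)=2^{(r-1)/2}w\mathrm{e}^{w^2/2}\Gamma\big(\tfrac{r+1}{2},\tfrac{w^2}{2}\big)$ and then quotes two ready-made bounds for the upper incomplete gamma function, namely $\Gamma(a,x)\leq x^{a-1}\mathrm{e}^{-x}$ for $0<a\leq1$ (Jameson) for part 1, and $\Gamma(a,x)\leq Bx^{a-1}\mathrm{e}^{-x}$ for $x>\tfrac{B}{B-1}(a-1)$ (Natalini--Palumbo, with $B=2$) for part 2. Your argument for part 1 (monotonicity of $t^{s-1}$ for $s\leq1$) is correct and is in fact just a direct proof of the Jameson bound in disguise; and your treatment of part 2 for $r\geq2$ --- the recursion $T_r(w)=w^r+(r-1)T_{r-2}(w)$ for $2\leq r\leq 3$, and the sign analysis of $D$ together with $r-1\geq\sqrt{2(r-1)}$ for $r\geq3$ --- is complete and correct.

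The genuine gap is the one you name yourself: for $1<r<2$ and $r-1<w<\sqrt{r-1}$ you reduce the claim, correctly, to the single boundary inequality $\int_a^\infty t^{a-1}\mathrm{e}^{-t^2/2}\,\mathrm{d}t\leq a^{a-1}\mathrm{e}^{-a^2/2}$ with $a=r-1\in(0,1)$, but you do not prove it, and the proof is therefore incomplete as written. This is not a case where you have overlooked an easy trick available in the paper: the Natalini--Palumbo condition $x>\tfrac{B}{B-1}(a-1)$ with $B=2$ and $a=\tfrac{r+1}{2}$ reads $w^2/2>r-1$, i.e.\ $w>\sqrt{2(r-1)}$, which for $1<r<3$ is \emph{strictly stronger} than the hypothesis $w>r-1$ of the lemma, so the paper's own citation covers exactly the same region that your comparison-function argument covers ($w\geq\sqrt{2(r-1)}$, since $w_1<\sqrt{2(r-1)}$) and leaves the same window $r-1<w\leq\sqrt{2(r-1)}$ unaddressed. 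In other words, the regime you flag as delicate really is the crux, and to finish you must actually verify the one-variable inequality in $a$ (numerically it holds with room to spare --- the left side tends to $0$ like $a$ times a logarithm as $a\to0^+$ while the right side blows up like $a^{a-1}$, and at $a=1$ it is the classical Mills-ratio bound --- so a splitting of the integral at $t=1$ as you suggest should work, but until that is written out the proof of part 2 for $1<r<2$ is not done).
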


\noindent{\emph{Proof of Proposition \ref{fprop}.}} (i) Suppose that $\Sigma$ is non-negative definite. We first recall inequality (2.2) of \cite{gaunt normal}, which states (under a change of variable in the integral) that, for positive definite $\Sigma$, and $g\in C_P^n(\mathbb{R}^d)$ and $h\in C_b^n(\mathbb{R})$,
\begin{equation}\label{aihp1}
	\bigg|\frac{\partial^nf(\mathbf{w})}{\prod_{j=1}^{n}\partial w_{i_j}}\bigg|\leq h_n\int_{0}^{1}t^{n-1}\mathbb{E}[P(\mathbf{z}_{t,\mathbf{w}}^{\Sigma^{1/2}\mathbf{Z}})]\,\mathrm{d}t,
\end{equation}
where $\mathbf{z}_{t,\mathbf{w}}^{\mathbf{x}}=t\mathbf{w}+\mathbf{x}\sqrt{1-t^2}$. We will denote the $i$-th component of $\mathbf{z}_{t,\mathbf{w}}^{\mathbf{x}}$ by $(\mathbf{z}_{t,\mathbf{w}}^{\mathbf{x}})_i$. 
Using inequality (\ref{aihp1}) with dominating function $P(\mathbf{w})=A+B\sum_{i=1}^{d}|w_i|^{r_i}$ gives the bound
\begin{equation*}
	\bigg|\frac{\partial^nf(\mathbf{w})}{\prod_{j=1}^{n}\partial w_{i_j}}\bigg|\leq h_n\int_{0}^{1}t^{n-1}\mathbb{E}\bigg[A+B\sum_{i=1}^{d}\big|(\mathbf{z}_{t,\mathbf{w}}^{\Sigma^{1/2}\mathbf{Z}})_i\big|^{r_i}\bigg]\,\mathrm{d}t.
\end{equation*}
Now using inequality (\ref{abineq}) of Lemma \ref{lem2}, we have
\begin{equation*}
	|(\mathbf{z}_{t,\mathbf{w}}^{\mathbf{x}})_i|^{r_i}\leq(t+\sqrt{1-t^2})^{r_i}(|w_i|^{r_i}+|x_i|^{r_i})\leq 2^{r_i/2}(|w_i|^{r_i}+|x_i|^{r_i}),
\end{equation*}
where we used basic calculus to bound $t+\sqrt{1-t^2}\leq\sqrt{2}$, $t\in(0,1)$.
Therefore
\begin{equation*}
	\bigg|\frac{\partial^nf(\mathbf{w})}{\prod_{j=1}^{n}\partial w_{i_j}}\bigg|\leq h_n\int_{0}^{1}t^{n-1}\bigg[A+B\sum_{i=1}^{d}2^{r_i/2}(|w_i|^{r_i}+\mathbb{E}[|(\Sigma^{1/2}\mathbf{Z})_i|^{r_i}])\bigg]\,\mathrm{d}t,
\end{equation*}
from which inequality (\ref{p2.21})  follows on evaluating the integral $\int_0^1t^{n-1}\,\mathrm{d}t=1/n$ and using that $(\Sigma^{1/2}\mathbf{Z})_i\sim N(0,\sigma_{ii})$, so that $\mathbb{E}|(\Sigma^{1/2}\mathbf{Z})_i|^{r_i}=\sigma_{ii}^{r_i/2}\mu_{r_i}$.

\vspace{2mm}

\noindent{(ii)} Suppose now that $\Sigma$ is positive definite. Under this assumption, we may recall inequality (2.3) of \cite{gaunt normal}, which states that, for $g\in C_P^n(\mathbb{R}^d)$ and $h\in C_b^n(\mathbb{R})$,
\begin{equation}\label{aihp2}
	\bigg|\frac{\partial^nf(\mathbf{w})}{\prod_{j=1}^{n}\partial w_{i_j}}\bigg|\leq h_{n-1}\min_{1\leq l\leq d}\int_{0}^{1}\frac{t^{n-1}}{\sqrt{1-t^2}}\mathbb{E}\big|(\Sigma^{-1/2}\mathbf{Z})_lP(\mathbf{z}_{t,\mathbf{w}}^{\Sigma^{1/2}\mathbf{Z}})\big|\,\mathrm{d}t.
\end{equation}
With inequality (\ref{aihp2}) at hand, a similar argument to the one used in part (i) of the proof yields the bound
\begin{align*}
	\bigg|\frac{\partial^nf(\mathbf{w})}{\prod_{j=1}^{n}\partial w_{i_j}}\bigg|&\leq h_{n-1}\min_{1\leq l\leq d}\int_{0}^{1}\frac{t^{n-1}}{\sqrt{1-t^2}}\bigg[A\mathbb{E}\big|(\Sigma^{-1/2}\mathbf{Z})_l\big|\\
	&\quad+B\sum_{i=1}^{d}2^{r_i/2}\big(|w_i|^{r_i}\mathbb{E}\big|(\Sigma^{-1/2}\mathbf{Z})_l\big|+\mathbb{E}|(\Sigma^{-1/2}\mathbf{Z})_l((\Sigma^{1/2}\mathbf{Z})_i)^{r_i}\big|\big)\bigg]\,\mathrm{d}t,
\end{align*}
from which inequality (\ref{p2.22}) follows on evaluating $\int_{0}^{1}\frac{t^{n-1}}{\sqrt{1-t^2}}\,\mathrm{d}t=\frac{\sqrt{\pi}\Gamma(n/2)}{2\Gamma((n+1)/2)}$. Inequality (\ref{p2.23}) now follows by setting $\Sigma=I_d$ in  inequality (\ref{p2.22}) and bounding $\mathbb{E}|Z|=\sqrt{2/\pi}<1$.

\vspace{2mm}

\noindent{(iii)} Suppose now that $d=1$ and $\Sigma=1$. We will employ the iterative approach to bounding solutions of Stein equations of \cite{dgv17}. Let $L$ denote the standard normal Stein operator given by $Lf(w)=f''(w)-wf'(w)$. Then, as noted by \cite{dgv17}, an induction on $n$ gives that, for $n\geq3$, 
\begin{equation}\label{DGV}
	Lf^{(n-2)}(w)=(h\circ g)^{(n-2)}(w)+(n-2)f^{(n-2)}(w).
\end{equation}
From (\ref{DGV}), a rearrangement and an application of the triangle inequality gives that 
\begin{equation}
	|f^{(n)}(w)|\leq|wf^{(n-1)}(w)|+(n-2)|f^{(n-2)}(w)|+|(h\circ g)^{(n-2)}(w)|\label{DGVineq}.
\end{equation}
From inequality (\ref{p2.21}) and Lemma 2.1 of \cite{gaunt normal}, we have the bounds
\begin{equation}
	(n-2)|f^{(n-2)}(w)|\leq h_{n-2}\big[A+2^{r/2}B(|w|^r+\mu_r)\big],\label{gn1}
\end{equation}
and
\begin{equation}
	|(h\circ g)^{(n-2)}(w)|\leq h_{n-2}\big[A+B|w|^r\big].\label{gn2}
\end{equation}

It now remains to bound $|wf^{(n-1)}(w)|$, which requires more work to get the correct $w$-dependence. To this end, we obtain from (\ref{n01}), (\ref{n02}) and (\ref{DGV}) the following representations for $f^{(n-1)}(w)$:
\begin{align}
	f^{(n-1)}(w)&=-\mathrm{e}^{w^2/2}\int_{w}^{\infty}[(h\circ g)^{(n-2)}(t)+(n-2)f^{(n-2)}(t)]\mathrm{e}^{-t^2/2}\,\mathrm{d}t\label{DGVsol1}\\
	&=\mathrm{e}^{w^2/2}\int_{-\infty}^{w}[(h\circ g)^{(n-2)}(t)+(n-2)f^{(n-2)}(t)]\mathrm{e}^{-t^2/2}\,\mathrm{d}t.\label{DGVsol2}
\end{align}
 We first consider the case $0\leq r\leq 1$. 
From (\ref{DGVsol1}), we have that, for $w>0$, 
\begin{align}
	|wf^{(n-1)}(w)|&\leq w\mathrm{e}^{w^2/2}\int_{w}^{\infty}\big[|(h\circ g)^{(n-2)}(t)|+(n-2)|f^{(n-2)}(t)|\big]\mathrm{e}^{-t^2/2}\,\mathrm{d}t\nonumber\\
	&\leq h_{n-2}w\mathrm{e}^{w^2/2}\int_{w}^{\infty}\big[2A+B\big((1+2^{r/2})t^r+2^{r/2}\mu_r\big)\big]\mathrm{e}^{-t^2/2}\,\mathrm{d}t\label{DGVineq2}\\
	&\leq h_{n-2}\big[2A+2^{r/2}B\big(2|w|^r+\mu_r\big)\big]\label{DGVineq3},
\end{align}
where in the final step we used inequality (\ref{Tr1}). Due to the equality between (\ref{DGVsol1}) and (\ref{DGVsol2}), the same argument can be used to verify that inequality (\ref{DGVineq3}) is also valid for $w<0$. Applying inequalities (\ref{gn1}), (\ref{gn2}) and (\ref{DGVineq3}) to inequality (\ref{DGVineq}) yields the bound (\ref{p2.24}) for $0\leq r\leq 1$.

Suppose now that $r>1$. When $r>1$, applying inequalities (\ref{Tr1}) and (\ref{Tr2}) to (\ref{DGVineq2}) (and noting that the case $w<0$ is dealt with similarly) yields the bound 
\begin{equation}
	|wf^{(n-1)}(w)|\leq h_{n-2}\big[2A+2^{r/2}B\big(4|w|^r+\mu_r\big)\big],\label{DGVineq4}
\end{equation}
for $|w|>r-1$. We now bound $|wf^{(n-1)}(w)|$ for $|w|\leq r-1$. We begin by noting that, for $n\geq3$, $u(n)=\Gamma((n-1)/2)/\Gamma(n/2)\leq 2/\sqrt{\pi}$, which follows because $u$ is a decreasing function of $n$ on $(3,\infty)$ (see \cite{ismail}) and $u(3)=2/\sqrt{\pi}$. Using this inequality, the bound (\ref{p2.23}) 
yields that, for $|w|\leq r-1$, 
\begin{align}
	|wf^{(n-1)}(w)|&\leq(r-1)|f^{(n-1)}(w)|\leq(r-1)h_{n-2}\big[A+2^{r/2}B (|w|^r+\mu_{r+1})\big]\label{DGVineq5}.
\end{align}
From inequalities (\ref{DGVineq4}) and (\ref{DGVineq5}) we deduce that, for $r>1$ and $w\in\mathbb{R}$,
 \begin{align}\label{flast00}|wf^{(n-1)}(w)|\leq h_{n-2}\big[(r+1)A+2^{r/2}B((r+3)|w|^r+r\mu_{r+1})\big],
\end{align} 
where we used that $\mu_r\leq\mu_{r+1}$ for $r>1$. Finally, we substitute the bounds (\ref{gn1}), (\ref{gn2}) and (\ref{flast00}) into inequality (\ref{DGVineq}}) to obtain inequality (\ref{p2.24}) for $r>1$,
where we again used that $\mu_r\leq\mu_{r+1}$ for $r>1$. 

\vspace{2mm}

\noindent{(iv)} In obtaining inequality (\ref{aihp1}), \cite{gaunt normal} used the assumption that $g\in C_P^n(\mathbb{R}^d)$ to bound the absolute value of the $n$-th order partial derivatives of $(h\circ g)(\mathbf{w})$ by $h_nP(\mathbf{w})$, $\mathbf{w}\in\mathbb{R}^d$ (see Lemma 2.1 of \cite{gaunt normal}). However, if $h(w)=w$, then under the assumption $g\in C_{P,*}^n(\mathbb{R}^d)$ we can bound the $n$-th order partial derivatives of $(h\circ g)(\mathbf{w})=g(\mathbf{w})$ by $P(\mathbf{w})$, $\mathbf{w}\in\mathbb{R}^d$. Also, note that if $h(w)=w$, then $h_n=1$. We therefore see that inequality (\ref{p2.21}) holds under the weaker assumption $g\in C_{P,*}^n(\mathbb{R}^d)$ if $h(w)=w$. For very similar reasons, we can also weaken the conditions on $g$ in parts (ii) and (iii) of the proposition if $h(w)=w$.

To prove the optimality of the polynomial growth rate in the bounds (\ref{p2.21})--(\ref{p2.24}) it suffices to consider the case $d=1$. Let $h(w)=w$ and $g(w)=|w|^q$, where $q\geq n\geq1$. Then $g\in C_{P_1,*}^n(\mathbb{R})$, where $P_1(w)=n!|w|^{q-n}$. Using the representation (\ref{mvnsolnh}) of $f$ and the dominated convergence theorem, we have that
\begin{align*}f^{(n)}(w)&=-\int_0^1\int_{-\infty}^\infty t^{n-1}g^{(n)}\big(tw+\sqrt{1-t^2}y\big)\phi(y)\,\mathrm{d}y\,\mathrm{d}t\\
&=-n!\int_0^1\int_{-\infty}^\infty t^{n-1}\big|tw+\sqrt{1-t^2}y\big|^{q-n}\big(\mathrm{sgn}\big(tw+\sqrt{1-t^2}y\big)\big)^n\phi(y)\,\mathrm{d}y\,\mathrm{d}t,
\end{align*}
where $\phi$ is the standard normal probability density function and $\mathrm{sgn}(x)$ is the sign of $x\in\mathbb{R}$. A simple asymptotic analysis now gives that
$|f^{(n)}(w)|\sim(n!/q)|w|^{q-n},
$ as $|w|\rightarrow\infty$. Thus, the $|w|^r$ growth rate in inequality (\ref{p2.21}) is optimal. The optimality of the growth rate in inequalities (\ref{p2.22})--(\ref{p2.24}) is established similarly. We observe that $g\in C_{P_2,*}^{n-1}(\mathbb{R})$ and $g\in C_{P_3,*}^{n-2}(\mathbb{R})$, where $P_2(w)=(n-1)!|w|^{q-n+1}$ and $P_3(w)=(n-2)!|w|^{q-n+2}$, and almost identical calculations now confirm that the growth rate in inequalities (\ref{p2.22})--(\ref{p2.24})  is optimal. \qed

\vspace{3mm}

\noindent{\emph{Proof of Proposition \ref{psiprop}.}} (i) Suppose that $\Sigma$ is non-negative definite. We will make use of the following bound given in Lemma 2.4 of \cite{gaunt normal}, which states (under a change of variable in the integral) that, for non-negative definite $\Sigma$, and $g\in C_P^{m+n}(\mathbb{R}^d)$ and $h\in C_b^{m+n}(\mathbb{R})$,
\begin{align}
	\bigg|\frac{\partial^n\psi_m(\mathbf{w})}{\prod_{j=1}^{n}\partial w_{i_j}}\bigg|\leq h_{m+n}\int_{0}^{1}\int_{0}^{1}t^{m+n-1}s^{n-1}\mathbb{E}[P(\mathbf{z}_{s,t,\mathbf{w}}^{\Sigma^{1/2}\mathbf{Z},\Sigma^{1/2}\mathbf{Z}'})]\,\mathrm{d}s\,\mathrm{d}t,\label{aihp3}
\end{align}
where $\mathbf{z}_{s,t,\mathbf{w}}^{\mathbf{x},\mathbf{y}}=st\mathbf{w}+t\sqrt{1-s^2}\mathbf{y}+\sqrt{1-t^2}\mathbf{x}$ and, here and in part (ii) of the proof, $\mathbf{Z}'$ is an independent copy of $\mathbf{Z}$. We will apply inequality (\ref{aihp3}) with dominating function $P(\mathbf{w})=A+B\sum_{i=1}^{d}|w_i|^{r_i}$. Now, using inequality (\ref{abcineq}) gives that
\begin{align}\label{zbound}|(\mathbf{z}_{s,t,\mathbf{w}}^{\mathbf{x},\mathbf{y}})_i|^{r_i}&\leq (st +t\sqrt{1-s^2}+\sqrt{1-t^2})^{r_i}(|w_i|^{r_i}+|x_i|^{r_i}+|y_i|^{r_i})\nonumber\\
&\leq 3^{r_i/2}(|w_i|^{r_i}+|x_i|^{r_i}+|y_i|^{r_i}),
\end{align}
where we used basic calculus to bound $st +t\sqrt{1-s^2}+\sqrt{1-t^2}\leq\sqrt{3}$ for $0<s,t<1$. We therefore obtain the bound
\begin{align*}
	\bigg|\frac{\partial^n\psi_m(\mathbf{w})}{\prod_{j=1}^{n}\partial w_{i_j}}\bigg|&\leq h_{m+n}\int_{0}^{1}\int_{0}^{1}t^{m+n-1}s^{n-1}\bigg[A+B\sum_{i=1}^{d}3^{r_i/2}\big(|w_i|^{r_i}+\mathbb{E}[|(\Sigma^{1/2}\mathbf{Z})_i|^{r_i}]\\
	&\quad+\mathbb{E}[|(\Sigma^{1/2}\mathbf{Z}')_i|^{r_i}]\big)\bigg]\,\mathrm{d}s\,\mathrm{d}t,
\end{align*}
and on evaluating  the integral
we deduce inequality (\ref{p2.31}). 

\vspace{2mm}

\noindent{(ii)} Suppose now that $\Sigma$ is positive definite. Under this assumption, we recall a bound from Lemma 2.4 of \cite{gaunt normal} that states that, for $g\in C_P^{m+n-2}(\mathbb{R}^d)$ and $h\in C_b^{m+n-2}(\mathbb{R})$,
\begin{align}
	\bigg|\frac{\partial^n\psi_m(\mathbf{w})}{\prod_{j=1}^{n}\partial w_{i_j}}\bigg|&\leq h_{m+n-2}\min_{1\leq k,l\leq d}\int_{0}^{1}\int_{0}^{1}\frac{t^{m+n-1}}{\sqrt{1-t^2}}\frac{s^{n-1}}{\sqrt{1-s^2}}\nonumber\\
	&\quad\times\mathbb{E}\big|(\Sigma^{-1/2}\mathbf{Z})_k(\Sigma^{-1/2}\mathbf{Z}')_lP(\mathbf{z}_{s,t,\mathbf{w}}^{\Sigma^{1/2}\mathbf{Z},\Sigma^{1/2}\mathbf{Z}'})\big|\,\mathrm{d}s\,\mathrm{d}t.\label{aihp4}
\end{align}
Applying inequality (\ref{zbound}) to the bound (\ref{aihp4}) gives that 
\begin{align*}
	\bigg|\frac{\partial^n\psi_m(\mathbf{w})}{\prod_{j=1}^{n}\partial w_{i_j}}\bigg|&\leq h_{m+n-2}\min_{1\leq k,l\leq d}\int_{0}^{1}\int_{0}^{1}\frac{t^{m+n-1}}{\sqrt{1-t^2}}\frac{s^{n-1}}{\sqrt{1-s^2}}\mathbb{E}\bigg[|(\Sigma^{-1/2}\mathbf{Z})_k(\Sigma^{-1/2}\mathbf{Z}')_l|\\
	&\quad\times\bigg(A+B\sum_{i=1}^{d}3^{r_i/2}\big(|w_i|^{r_i}+|(\Sigma^{1/2}\mathbf{Z})_i|^{r_i}+|(\Sigma^{1/2}\mathbf{Z}')_i|^{r_i}\big)\bigg)\bigg]\,\mathrm{d}s\,\mathrm{d}t,
\end{align*}
from which inequality (\ref{p2.32}) follows on evaluating the integral similarly to how we did in proving inequality (\ref{p2.22}) of Proposition \ref{fprop}.  The simplified bound inequality (\ref{p2.33}) now follows by setting $\Sigma=I_d$ in inequality (\ref{p2.32}) and using that, for such $\Sigma$ we have $\mathbb{E}|(\Sigma^{-1/2}\mathbf{Z})_k|=\mathbb{E}|Z|=\sqrt{2/\pi}$ for $k=1,\ldots,d$.

\vspace{2mm}

\noindent{(iii)} We proceed similarly to the proof of inequality (\ref{p2.24}), using the iterative technique of \cite{dgv17}. Recall that $L\psi_m(w)=f^{(m)}(w)$, where $L$ is the standard normal Stein operator. Differentiating 
gives that 
\begin{equation}
	L\psi_m'(w)=f^{(m+1)}(w)+\psi_m'(w).\label{dvg}
\end{equation}
From (\ref{dvg}) and the triangle inequality, we get that 
\begin{equation}
	|\psi_m^{(3)}(w)|\leq|w\psi_m''(w)|+|f^{(m+1)}(w)|+|\psi_m'(w)|\label{dgvineq1}.
\end{equation}
From (\ref{p2.24}) and (\ref{p2.33}), we have the bounds 
\begin{align}|f^{(m+1)}(w)|&\leq 2h_{m-1}\big[2A+2^{r/2}B(2|w|^r+\mu_{r})\big], \quad 0\leq r\leq1,\label{dgvineq2aaa} \\
	|f^{(m+1)}(w)|&\leq h_{m-1}\big[(r+3)A+2^{r/2}B((r+5)|w|^r+(r+1)\mu_{r+1})\big],\quad r>1,\label{dgvineq2} \\
	|\psi_m'(w)|
	&\leq h_{m-1}\big[A+3^{r/2}B(|w|^r+2\mu_{r+1})\big], \quad r\geq0,\label{dgvineq3}
\end{align}
where we used that, for $m\geq2$, $v(m)=\Gamma((m+1)/2)/\Gamma(m/2+1)\leq\sqrt{\pi}/2$, which follows because $v$ is a decreasing function of $m$ on $(2,\infty)$ (see \cite{ismail}) and $v(2)=\sqrt{\pi}/2$, in applying inequality (\ref{p2.33}). 

It now remains to bound $|w\psi_m''(w)|$, which requires more effort to establish a bound with the correct $w$-dependence.
From (\ref{n01}), (\ref{n02}) and (\ref{dvg}) we obtain the following representation for $\psi_m''(w)$:
\begin{align}
	\psi_m''(w)&=-\mathrm{e}^{w^2/2}\int_{w}^{\infty}\big[f^{(m+1)}(t)+\psi_m'(t)\big]\mathrm{e}^{-t^2/2}\,\mathrm{d}t\label{dgv1}\\
	&=\mathrm{e}^{w^2/2}\int_{-\infty}^{w}\big[f^{(m+1)}(t)+\psi_m'(t)\big]\mathrm{e}^{-t^2/2}\,\mathrm{d}t\label{dgv2}.
	\end{align}
 We first consider the case $0\leq r\leq1$. 
 Applying (\ref{dgvineq2aaa}) and (\ref{dgvineq3}) to (\ref{dgv1}) gives that, for $w\geq0$, 
\begin{align}|w\psi_m''(w)|&\leq w\mathrm{e}^{w^2/2}\int_{w}^{\infty}\big[|f^{(m+1)}(t)|+|\psi_m'(t)|\big]\mathrm{e}^{-t^2/2}\,\mathrm{d}t\label{wpsi2}\\
	&\leq h_{m-1}w\mathrm{e}^{w^2/2}\int_{w}^{\infty}\big[5A+3^{r/2}B(5t^r+2\mu_{r+1}+2\mu_r)\big]\mathrm{e}^{-t^2/2}\,\mathrm{d}t\nonumber\\
\label{req1}	&\leq h_{m-1}\big[5A+3^{r/2}B(5|w|^r+2\mu_{r+1}+2\mu_r)\big],
\end{align}
where we used inequality (\ref{Tr1}) to bound the integral. Due to the equality between (\ref{dgv1}) and (\ref{dgv2}), it is readily seen that inequality (\ref{req1}) also holds for $w<0$.  Applying inequalities (\ref{dgvineq2aaa}), (\ref{dgvineq3}) and (\ref{req1}) to inequality (\ref{dgvineq1}), as well as using the inequality $2\mu_r\leq 3\mu_{r+1}$, $0\leq r\leq1$, to simplify the bound, now yields (\ref{p2.34}) for $0\leq r\leq1$. 
 
Suppose now that $r>1$. We now apply inequalities (\ref{dgvineq2}) and (\ref{dgvineq3}) to (\ref{wpsi2}) to get that, for $|w|>r-1$,
\begin{align}
	|w\psi_m''(w)|
	&\leq h_{m-1}|w|\mathrm{e}^{w^2/2}\int_{|w|}^{\infty}\big[(r+4)A+3^{r/2}B((r+6)t^r+(r+3)\mu_{r+1})\big]\mathrm{e}^{-t^2/2}\,\mathrm{d}t\nonumber
	\\
	&\leq h_{m-1}\big[(r+4)A+3^{r/2}B((2r+12)|w|^r+(r+3)\mu_{r+1})\big],\label{dgvineq5}
	\end{align}
where we used inequalities (\ref{Tr1}) and (\ref{Tr2}) to bound the integral. Now suppose that $|w|\leq r-1$. Rearranging $L\psi_m(w)=f^{(m)}(w)$ and applying the triangle inequality gives that, for $|w|\leq r-1$, 
\begin{align*}
	|w\psi_m''(w)|\leq|w^2\psi_m'(w)|+|wf^{(m)}(w)|\leq (r-1)^2|\psi_m'(w)|+(r-1)|f^{(m)}(w)|.
	\end{align*}
Using inequalities (\ref{p2.23}) and (\ref{p2.33}) now gives that
\begin{align}
	|w\psi_m''(w)|&\leq (r-1)^2h_{m-1}\big[A+3^{r/2}B(|w|^r+2\mu_{r+1})\big]\nonumber\\
	&\quad+(r-1)h_{m-1}\big[A+2^{r/2}B(|w|^r+\mu_{r+1})\big] \nonumber\\
\label{wppppp}	&\leq h_{m-1}\big[r^2A+3^{r/2}B(r^2|w|^r+2r^2\mu_{r+1})\big],
\end{align}
where in applying inequalities (\ref{p2.23}) and (\ref{p2.33}) we used the bounds $\Gamma((m+1)/2)/\Gamma(m/2+1)\leq\sqrt{\pi}/2$ and $\Gamma(m/2)/\Gamma((m+1)/2)\leq2/\sqrt{\pi}$ for $m\leq2$, which can be verified with the usual argument for bounding ratios of gamma functions that we have used earlier in this paper. From inequalities  (\ref{dgvineq5}) and (\ref{wppppp}) we deduce that, for $r>1$ and $w\in\mathbb{R}$,
\begin{align}\label{wpppppz}	|w\psi_m''(w)|\leq h_{m-1}\big[(r^2+4)A+3^{r/2}B((r^2+r+12)|w|^r+(2r^2+2)\mu_{r+1})\big].
\end{align}
 Finally, we substitute the bounds (\ref{dgvineq2}), (\ref{dgvineq3}) and (\ref{wpppppz}) into inequality (\ref{dgvineq1}) to obtain inequality (\ref{p2.34}) for $r>1$.

\vspace{2mm}

\noindent{(iv)} When $h(w)=w$, the bounds in parts (i), (ii) and (iii) hold for $g$ in the weaker classes $C_{P,*}^{m+n}(\mathbb{R}^d)$, $C_{P,*}^{m+n-2}(\mathbb{R}^d)$ and $C_{P,*}^{m-1}(\mathbb{R})$ for the same reason given in part (iv) of the proof of Proposition \ref{fprop}. 

The optimality of the $|w|^r$ growth rate of inequalities (\ref{p2.31})--(\ref{p2.34}) is established similarly to we did in part (iv) of the proof of Proposition \ref{fprop}. We demonstrate the optimality of the growth rate in inequality (\ref{p2.31}); similar considerations show the optimality of the growth rate in inequalities (\ref{p2.32})--(\ref{p2.34}). Let $d=1$, $h(w)=w$ and $g(w)=|w|^q$, where $q\geq m+n$. Observe that $g\in C_{P,*}^{m+n}(\mathbb{R})$ with $P(w)=(m+n)!|w|^{q-m-n}$. Also, by the dominated convergence theorem,
\begin{align*}\psi_m^{(n)}(w)&=\int_0^1\!\int_0^1\!\int_{-\infty}^\infty\!\int_{-\infty}^\infty t^{m+n-1}s^{n-1}g^{(m+n)}(z_{s,t,w}^{x,y})\phi(x)\phi(y)\,\mathrm{d}x\,\mathrm{d}y\,\mathrm{d}s\,\mathrm{d}t\\
&=(m+n)!\int_0^1\!\int_0^1\!\int_{-\infty}^\infty\!\int_{-\infty}^\infty t^{m+n-1}s^{n-1}|z_{s,t,w}^{x,y}|^{q-m-n}(\mathrm{sgn}(z_{s,t,w}^{x,y}))^{m+n}\times\\
&\quad\times\phi(x)\phi(y)\,\mathrm{d}x\,\mathrm{d}y\,\mathrm{d}s\,\mathrm{d}t,
\end{align*}
where we recall that $z_{s,t,w}^{x,y}=stw+t\sqrt{1-s^2}y+\sqrt{1-t^2}x$. A simple asymptotic analysis now gives that
$|\psi_m^{(n)}(w)|\sim(m+n)!|w|^{q-m-n}/(q(q-m)),
$ as $|w|\rightarrow\infty$. Thus, the $|w|^r$ growth rate in inequality (\ref{p2.31}) is optimal.
\qed

\section{Bounds for the distance between the distributions of $g(\mathbf{W})$ and $g(\mathbf{Z})$}\label{sec3}

In this section, we obtain general bounds to quantify the quality of the distributional approximation of $g(\mathbf{W})$ by $g(\mathbf{Z})$, where $\mathbf{Z}\sim\mathrm{MVN}_d(\mathbf{0},I_d)$, in the setting that $\mathbf{W}$ is a sum of independent random vectors with independent components. We shall suppose that $X_{1,1},\ldots,X_{n_1,1},\ldots,X_{1,d},\ldots,X_{n_d,d}$ are independent random variables, and define  the random vector $\mathbf{W}:=(W_1,\ldots,W_d)^\intercal$, where $W_j=n_j^{-1/2}\sum_{i=1}^{n_j}X_{ij}$, for $1\leq j\leq d$. We shall also assume that $\mathbb{E}[X_{ij}^k]=\mathbb{E}[Z^k]$ for all $1\leq i\leq n_j$, $1\leq j\leq d$ and all $k\in\mathbb{Z}^+$ such that $k\leq p$, for some $p\geq2$; having three or more matching moments allows for faster convergence rates than the $O(n^{-1/2})$ Berry-Esseen rate. As in Section \ref{sec2}, we shall suppose that the partial derivatives of $g$ up to a specified order exist and have polynomial growth rate. To this end, we introduce the dominating function $P(\mathbf{w})=A+B\sum_{i=1}^{d}|w_i|^{r_i},$ where $A, B$ and $r_1,\ldots, r_d$ are non-negative constants. 
In the univariate $d=1$ case, we simplify notation, writing $W=n^{-1/2}\sum_{i=1}^nX_i$, where $X_1,\ldots,X_n$ are independent random variables such that $\mathbb{E}[X_i^k]=\mathbb{E}[Z^k]$ for all $1\leq i\leq n$ and $1\leq k\leq p$. The dominating function also takes the simpler form, $P(w)=A+B|w|^r$.

Our general bounds are stated in the following theorem, and improve on the bounds of Theorems 3.2--3.5 of \cite{gaunt normal} through smaller numerical constants and weaker moment conditions. These improvements are a result of our improved bounds of Propositions \ref{fprop} and \ref{psiprop} on the solutions of the Stein equations (\ref{mvng}) and (\ref{mvnpsi}) (the improved $w$-dependence of the univariate bounds results in weaker moment conditions) and some more careful simplifications in the proof of Theorem \ref{thm3.2} to improve the dependence of the bounds in parts (iii) and (iv) (in which $g$ is an even function) on the moments of the $X_{ij}$. The rate of convergence of all bounds with respect to $n$ is of optimal order; see \cite[Proposition 3.1]{gaunt normal}. We shall let $\Delta_h(g(\mathbf{W}),g(\mathbf{Z}))$ denote the quantity $|\mathbb{E}[h(g(\mathbf{W}))]-\mathbb{E}[h(g(\mathbf{Z}))]|$.  The bounds involve the constants $c_r=\max\{1,2^{r-1}\},$ $r\geq 0$.


\begin{theorem}\label{thm3.2} Suppose that the above notations and assumptions prevail. Then under additional assumptions, as given below, the following bounds hold. 

\vspace{2mm}

\noindent{(i)} Suppose $\mathbb{E}|X_{ij}|^{r_l+p+1}<\infty$ for all $i,j,l$, and that $g\in C_P^p(\mathbb{R}^d)$ and $h\in C_b^p(\mathbb{R})$. Then
	\begin{align}
		&\Delta_h(g(\mathbf{W}),g(\mathbf{Z}))\nonumber\\
		&\leq \frac{(p+1)\sqrt{\pi}\Gamma(\frac{p+1}{2})}{2p!\Gamma(\frac{p}{2}+1)}h_p\sum_{j=1}^{d}\sum_{i=1}^{n_j}\frac{1}{n_j^{(p+1)/2}}\bigg[A\mathbb{E}|X_{ij}|^{p+1}+B\sum_{k=1}^{d}2^{r_k/2}\bigg(c_{r_k}\mathbb{E}|X_{ij}|^{p+1}\mathbb{E}|W_k|^{r_k}\nonumber\\
	\label{thm3.1b}	&\quad+\frac{c_{r_k}}{n_k^{r_k/2}}\mathbb{E}|X_{ij}^{p+1}X_{ik}^{r_k}|+\mu_{r_k+1}\mathbb{E}|X_{ij}|^{p+1}\bigg)\bigg].
	\end{align}
\noindent{(ii)} Suppose $\mathbb{E}|X_{i}|^{r+p+1}<\infty$ for all $i$, and that $g\in C_P^{p-1}(\mathbb{R})$ and $h\in C_b^{p-1}(\mathbb{R})$. Then  
		\begin{align}
		\Delta_h(g(W),g(Z))
		&\leq \frac{(p+1)}{p!n^{(p+1)/2}}h_{p-1}\sum_{i=1}^{n}\bigg[\alpha_r A\mathbb{E}|X_i|^{p+1}+2^{r/2}B\bigg(c_r\beta_r\bigg(\mathbb{E}|X_i|^{p+1}\mathbb{E}|W|^r\nonumber\\
\label{thm3.3b}		&\quad+\frac{1}{n^{r/2}}\mathbb{E}|X_i|^{r+p+1}\bigg)+\gamma_r\mathbb{E}|X_i|^{p+1}\bigg)\bigg].
	\end{align}
\noindent{(iii)} Suppose $\mathbb{E}|X_{ij}|^{r_l+p+2}<\infty$ for all $i,j,l$, and that $g\in C_P^{p+2}(\mathbb{R}^d)$ and $h\in C_b^{p+2}(\mathbb{R})$. In addition, suppose that $p\geq2$ is even and that $g$ is an even function. 
Then	
	\begin{align}
		&\Delta_h(g(\mathbf{W}),g(\mathbf{Z}))\nonumber\\
		&\leq\frac{1}{p!}h_{p+2}\bigg\{\sum_{j=1}^{d}\sum_{i=1}^{n_j}\frac{1}{n_j^{p/2+1}}\frac{2p+3}{(p+1)(p+2)}\bigg[A\mathbb{E}|X_{ij}|^{p+2}+B\sum_{k=1}^{d}2^{r_k/2}\bigg(c_{r_k}\mathbb{E}|X_{ij}|^{p+2}\mathbb{E}|W_k|^{r_k}\nonumber\\
		&\quad+\frac{c_{r_k}}{n_k^{r_k/2}}\mathbb{E}|X_{ij}^{p+2}X_{ik}^{r_k}|+\mu_{r_k}\mathbb{E}|X_{ij}|^{p+2}\bigg)\bigg]+\frac{3\pi\Gamma(\frac{p}{2}+2)}{8\sqrt{2}\Gamma(\frac{p+5}{2})}\sum_{j=1}^{d}\sum_{i=1}^{n_j}\frac{|\mathbb{E}[X_{ij}^{p+1}]|}{n_j^{(p+1)/2}}\nonumber\\
		&\quad\times\sum_{k=1}^{d}\sum_{l=1}^{n_k}\frac{1}{n_k^{3/2}}\bigg[A\mathbb{E}|X_{lk}|^3+B\sum_{t=1}^{d}3^{r_t/2}\bigg(c_{r_t}\mathbb{E}|X_{lk}|^3\mathbb{E}|W_t|^{r_t}+\frac{c_{r_t}}{n_t^{r_t/2}}\mathbb{E}|X_{lk}^3X_{lt}^{r_t}|\nonumber\\
\label{thm3.4b}		&\quad+2\mu_{r_t+1}\mathbb{E}|X_{lk}|^3\bigg)\bigg]\bigg\}.
	\end{align}
\noindent{(iv)} Suppose $\mathbb{E}|X_{i}|^{r+p+2}<\infty$ for all $i$, and that $g\in C_P^{p}(\mathbb{R})$ and $h\in C_b^{p}(\mathbb{R})$. In addition, suppose that $p\geq2$ is even and that $g$ is an even function. Then 	
		\begin{align}
		\Delta_h(g(W),g(Z))
		&\leq\frac{1}{p!n^{p/2+1}}h_p\bigg\{\sum_{i=1}^{n}\frac{2p+3}{p+1}\bigg[\alpha_r A\mathbb{E}|X_i|^{p+2}+2^{r/2}B\bigg(c_r\beta_r\bigg(\mathbb{E}|X_i|^{p+2}\mathbb{E}|W|^r\nonumber\\
		&\quad+\frac{1}{n^{r/2}}\mathbb{E}|X_i|^{r+p+2}\bigg)\!+\!\gamma_r\mathbb{E}|X_i|^{p+2}\bigg)\bigg]\!+\!\frac{3}{2n}\sum_{i=1}^{n}\sum_{l=1}^{n}|\mathbb{E}[X_i^{p+1}]|\bigg[\tilde\alpha_r A\mathbb{E}|X_l|^3\nonumber\\
\label{thm3.5b}		&\quad+3^{r/2}B\bigg(c_r\tilde\beta_r\bigg(\mathbb{E}|X_l|^3\mathbb{E}|W|^r
			+\frac{1}{n^{r/2}}\mathbb{E}|X_l|^{r+3}\bigg)\!+\tilde\gamma_r\mathbb{E}|X_l|^3\bigg)\bigg]\bigg\}.
	\end{align}
	\noindent{(v)} If $h(w)=w$ for all $w\in\mathbb{R}$, then the inequalities in parts (i)--(iv) hold for $g$ in the classes $C_{P,*}^p(\mathbb{R}^d)$, $C_{P,*}^{p-1}(\mathbb{R})$, $C_{P,*}^{p+2}(\mathbb{R}^d)$ and $C_{P,*}^{p}(\mathbb{R})$, respectively.
\end{theorem}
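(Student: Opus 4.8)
The plan is to run Stein's method with the $\mathrm{MVN}_d(\mathbf{0},I_d)$ Stein equation (\ref{mvng}) and test function $h(g(\cdot))$. Writing $f=f_h$ for the solution (\ref{mvnsolnh}), the quantity $\Delta_h(g(\mathbf{W}),g(\mathbf{Z}))$ equals $\big|\mathbb{E}\big[\sum_{j=1}^{d}(\partial_j^2 f(\mathbf{W})-W_j\partial_j f(\mathbf{W}))\big]\big|$ by (\ref{emvn}) with $\Sigma=I_d$, so it suffices to bound this expression. I would treat each coordinate $j$ separately and, for each summand $i$, expand around the leave-one-out vector $\mathbf{W}^{(i,j)}$ obtained by replacing $W_j$ with $W_j-n_j^{-1/2}X_{ij}$, so that $X_{ij}$ is independent of $\mathbf{W}^{(i,j)}$. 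Using $\mathbb{E}[W_j\partial_j f(\mathbf{W})]=n_j^{-1/2}\sum_i\mathbb{E}[X_{ij}\partial_j f(\mathbf{W})]$, I would Taylor expand both $\partial_j f$ and $\partial_j^2 f$ in the $j$-th coordinate about $\mathbf{W}^{(i,j)}$, to the order at which the $(p+1)$-th derivative first appears, and take expectations, factorising through the independence of $X_{ij}$ and $\mathbf{W}^{(i,j)}$.

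The engine of the argument is that the two families of main Taylor terms combine. Matching the term carrying $\partial_j^{m}f(\mathbf{W}^{(i,j)})$ from each expansion, its coefficient is proportional to $\frac{\mathbb{E}[X_{ij}^{m-2}]}{(m-2)!}-\frac{\mathbb{E}[X_{ij}^{m}]}{(m-1)!}$; since $X_{ij}$ matches the first $p$ moments of $Z$ and the normal moments satisfy $\mathbb{E}[Z^{m}]=(m-1)\mathbb{E}[Z^{m-2}]$, this vanishes for every $m\le p$. Hence the main terms cancel and only two remainder terms survive, one from the drift expansion carrying $X_{ij}^{p+1}$ and one from the Laplacian expansion carrying $X_{ij}^{p-1}$, both involving $\partial_j^{p+1}f$ along the segment from $\mathbf{W}^{(i,j)}$ to $\mathbf{W}$. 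I would bound $|\partial_j^{p+1}f|$ with inequality (\ref{p2.23}) (and with (\ref{p2.24}) in the univariate case (ii)), and control $|(\cdot)_k|^{r_k}$ at the shifted argument by splitting with Lemma \ref{lem2} and the constant $c_r=\max\{1,2^{r-1}\}$, using independence to separate the moments of $X_{ij}$ from those of the remaining summands. The two remainders then merge: since $\mathbb{E}[X_{ij}^2]=1$, Lyapunov's inequality gives $\mathbb{E}|X_{ij}|^{p-1}\le\mathbb{E}|X_{ij}|^{p+1}$, which lets me absorb the $X_{ij}^{p-1}$ remainder into the $X_{ij}^{p+1}$ one and produces the clean prefactor $\tfrac{p+1}{p!}$ together with the explicit $\alpha_r,\beta_r,\gamma_r$ in (ii). This gives parts (i) and (ii).

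For the even case (iii) and (iv), with $p$ even and $g$ even, the surviving $m=p+1$ contribution now carries the odd moment $\mathbb{E}[X_{ij}^{p+1}]$, which is generally nonzero and must be extracted rather than bounded away. The key observation is that when $g$ is even the solution $f$ is an even function (directly from (\ref{mvnsolnh}) and the symmetry of $\mathbf{Z}$), so $\partial_j^{p+1}f$ is odd and $\mathbb{E}[\partial_j^{p+1}f(\mathbf{Z})]=0$; consequently the factor $\mathbb{E}[\partial_j^{p+1}f(\mathbf{W}^{(i,j)})]$ multiplying $\mathbb{E}[X_{ij}^{p+1}]$ is itself a Stein discrepancy of the same type we began with. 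I would bound it by introducing the second Stein equation (\ref{mvnpsi}) with right-hand side $\partial^{p+1}f$ and solution $\psi_{p+1}$, and running the leave-one-out expansion a second time, now invoking Proposition \ref{psiprop} (inequalities (\ref{p2.33}) and (\ref{p2.34})) to control the derivatives of $\psi_{p+1}$; this produces the second, double-sum group of (\ref{thm3.4b}) and (\ref{thm3.5b}), whose $3^{r/2}$, $2\mu_{r+1}$ and $\tilde\alpha_r,\tilde\beta_r,\tilde\gamma_r$ factors come directly from Proposition \ref{psiprop}. The remaining, one-order-higher terms of the first expansion give the first group, at rate $n^{-p/2-1}$ with moments $\mathbb{E}|X_{ij}|^{p+2}$, bounded through Proposition \ref{fprop} and merged by the same Lyapunov step, now in the form $\mathbb{E}|X_{ij}|^{p}\le\mathbb{E}|X_{ij}|^{p+2}$.

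Finally, part (v) is immediate: when $h(w)=w$ one has $h_n=1$ and $h\circ g=g$, so the relevant partial derivatives of $h\circ g$ are bounded directly by $P$ under the weaker hypotheses, and the bounds of Propositions \ref{fprop} and \ref{psiprop} hold for the starred classes by their respective parts (iv); the displayed inequalities are then unchanged. The main obstacle I anticipate is the even case: one must confirm that $f$, and in turn $\psi_{p+1}$, inherits the parity of $g$, exploit the vanishing $\mathbb{E}[\partial_j^{p+1}f(\mathbf{Z})]=0$ to gain the extra factor of order $n^{-1/2}$ that drives the faster rate, and carry out the two-level moment and remainder bookkeeping precisely enough to land the stated clean constants. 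By comparison, parts (i), (ii) and (v) are routine once the cancellation identity and the Lyapunov merging step are in hand.
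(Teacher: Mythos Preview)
Your proposal is correct and follows essentially the same approach as the paper. The paper packages the leave-one-out Taylor expansion and moment-matching cancellation you describe into citations of Lemmas 3.1 and 3.3 of \cite{gaunt normal} (yielding the intermediate bounds (\ref{springz}) and (\ref{dig hole})), and wraps the step of bounding $\mathbb{E}|X_{ij}^q\partial_j^t f(\mathbf{W}_\theta^{(i,j)})|$ via Propositions \ref{fprop} and \ref{psiprop} together with the $c_r$-splitting into Lemma \ref{lem3.6}; your sketch reproduces exactly these ingredients. One small point in your part (iii)/(iv) outline: the clean Stein discrepancy to which the second equation (\ref{mvnpsi}) is applied is $\mathbb{E}[\partial_j^{p+1}f(\mathbf{W})]$, not $\mathbb{E}[\partial_j^{p+1}f(\mathbf{W}^{(i,j)})]$; passing from the latter to the former costs one extra remainder term carrying $|\mathbb{E}[X_{ij}^{p+1}]|\,\mathbb{E}|X_{ij}\,\partial_j^{p+2}f(\mathbf{W}_{\theta_3}^{(i,j)})|$, which is the third summand in (\ref{dig hole}) and is absorbed into the first group of (\ref{thm3.4b}) and (\ref{thm3.5b}).
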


If one only requires bounds with an explicit dependence on $n_1,\ldots,n_d$ and the dimension $d$, 
but not with explicit constants, then the bounds given in the following corollary may be preferable.

\begin{corollary}\label{simplebds}	Let $r_*=\max_{1\leq j\leq d}r_j$ and $n_*=\min_{1\leq j\leq d}n_j$, and also let $\tilde{h}_p=\sum_{j=1}^p\|h^{(j)}\|$. Let $C$ be a constant that does not depend on $n_1,\ldots,n_d$ and $d$ and which may change from line to line. Then
	
	\vspace{2mm}
	
		\noindent{(i)} Under the assumptions of part (i) of Theorem \ref{thm3.2}, 
		\begin{equation}
	\label{dd1}		\Delta_h(g(\mathbf{W}),g(\mathbf{Z}))\leq \frac{Cd\tilde{h}_p}{n_*^{(p+1)/2}}\sum_{j=1}^d\sum_{i=1}^{n_j}\mathbb{E}|X_{ij}|^{r_*+p+1}.
		\end{equation}
		(ii) Under the assumptions of part (ii) of Theorem \ref{thm3.2},
		\begin{equation*}
			\Delta_h(g(W),g(Z))\leq \frac{C\tilde{h}_{p-1}}{n^{(p+1)/2}}\sum_{i=1}^n\mathbb{E}|X_{i}|^{r+p+1}.
		\end{equation*}
		(iii) Under the assumptions of part (iii) of Theorem \ref{thm3.2}, 
		\begin{align}
		\label{dd2}	\Delta_h(g(\mathbf{W}),g(\mathbf{Z}))&\leq \frac{Cd\tilde{h}_{p+2}}{n_*^{p/2+2}}\sum_{j=1}^d\sum_{k=1}^d\sum_{i=1}^{n_j}\sum_{l=1}^{n_k}\big(\mathbb{E}|X_{ij}|^{r_*+p+2}+|\mathbb{E}[X_{ij}^{p+1}]|\mathbb{E}|X_{lk}|^{r_*+3} \big).
		\end{align}
		(iv) Under the assumptions of part (iv) of Theorem \ref{thm3.2},
		\begin{equation*}
			\Delta_h(g(W),g(Z))\leq \frac{C\tilde{h}_p}{n^{p/2+2}}\sum_{i=1}^n\sum_{l=1}^n\big(\mathbb{E}|X_i|^{r+p+2}+|\mathbb{E}[X_i^{p+1}]|\mathbb{E}|X_l|^{r+3} \big).
		\end{equation*}
	\end{corollary}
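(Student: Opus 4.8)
The plan is to obtain each of the four bounds by coarsening the corresponding explicit estimate of Theorem \ref{thm3.2}, discarding every factor that does not depend on $n_1,\dots,n_d$ or $d$. I would first replace $h_p$ by $\tilde h_p$: since $h_p=\sum_{k=1}^p{p\brace k}\|h^{(k)}\|\le\big(\max_{1\le k\le p}{p\brace k}\big)\tilde h_p$ and the Stirling numbers depend only on $p$, the discrepancy is absorbable into $C$. I would then absorb into $C$ all prefactors depending only on $p$ and on $r_1,\dots,r_d$ through $r_*$: the Gamma-function ratios, the powers $2^{r_k/2}\le 2^{r_*/2}$ and $3^{r_t/2}\le 3^{r_*/2}$, the factors $c_{r_k}\le c_{r_*}$, and $\mu_{r_k},\mu_{r_k+1}$. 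Finally, using $n_j\ge n_*$ I would replace $n_j^{-(p+1)/2}$ by $n_*^{-(p+1)/2}$ (and likewise $n_j^{-(p/2+1)}$, $n_k^{-3/2}$ in parts (iii)--(iv)) and pull the surviving power of $n_*$ outside the sums.

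The substantive work is the reduction of the various moments to a single top-order moment. Since each $X_{ij}$ has unit variance, Lyapunov's inequality gives $\mathbb{E}|X_{ij}|^{s}\le\mathbb{E}|X_{ij}|^{t}$ for all $s\le t$ with $t\ge 2$ (when $s<2$ one instead uses $\mathbb{E}|X_{ij}|^{s}\le 1\le\mathbb{E}|X_{ij}|^{t}$), so every pure moment collapses to $\mathbb{E}|X_{ij}|^{r_*+p+1}$ in parts (i)--(ii) and to $\mathbb{E}|X_{ij}|^{r_*+p+2}$, $\mathbb{E}|X_{lk}|^{r_*+3}$ in parts (iii)--(iv). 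For the mixed moments $\mathbb{E}|X_{ij}^{p+1}X_{ik}^{r_k}|$ I would split on $k$: the diagonal term $k=j$ is the single moment $\mathbb{E}|X_{ij}|^{p+1+r_j}\le\mathbb{E}|X_{ij}|^{r_*+p+1}$, while for $k\ne j$ independence of the components factorises it into $\mathbb{E}|X_{ij}|^{p+1}\,\mathbb{E}|X_{ik}|^{r_k}$, and the factor $n_k^{-r_k/2}\le 1$ is dropped. The off-diagonal component moments $\mathbb{E}|X_{ik}|^{r_k}$ and the normalised-sum moments $\mathbb{E}|W_k|^{r_k}$ I would bound by constants (the latter uniformly in $n_k$, via a standard moment inequality for sums of independent random variables) and absorb into $C$, which the statement permits to depend on the laws of the $X_{ij}$ but not on $n_j$ or $d$. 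After these reductions every summand of parts (i) and (ii) is at most $C\,\mathbb{E}|X_{ij}|^{r_*+p+1}$, and the inner sum $\sum_{k=1}^d$ contributes precisely the factor $d$, giving (\ref{dd1}) and, with $d=1$, its univariate counterpart.

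For parts (iii) and (iv) the bounds of Theorem \ref{thm3.2} are already a product of two sums, the second weighted by $|\mathbb{E}[X_{ij}^{p+1}]|$, which is the feature responsible for the faster rate when $g$ is even. Here I would coarsen each factor separately, leaving $|\mathbb{E}[X_{ij}^{p+1}]|$ untouched, so that the product of the two sums becomes the quadruple sum $\sum_{j}\sum_{k}\sum_{i}\sum_{l}\big(\mathbb{E}|X_{ij}|^{r_*+p+2}+|\mathbb{E}[X_{ij}^{p+1}]|\,\mathbb{E}|X_{lk}|^{r_*+3}\big)$ of (\ref{dd2}); the combined power is $n_*^{p/2+2}$ because $n_j^{-(p+1)/2}n_k^{-3/2}\le n_*^{-(p/2+2)}$, and the leftover ``diagonal'' first term of Theorem \ref{thm3.2}(iii) is dominated by this cruder quadruple sum since $\sum_k n_k\ge n_*$ supplies the extra power of $n_*$. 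Tolerating this over-counting is exactly what permits the single common moment and the clean $n_*$-powers in the statement.

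The step I expect to be the main obstacle is the control of the auxiliary moment factors that are not the leading column-$j$ moment, above all the normalised-sum moments $\mathbb{E}|W_k|^{r_k}$: unlike the pure and mixed moments of the $X_{ij}$, these cannot be compared to $\mathbb{E}|X_{ij}|^{r_*+p+1}$ term by term and require a genuine moment inequality to see that they remain bounded as $n_k\to\infty$, which is what keeps the bounds of parts (i)--(ii) in single-sum form with only the factor $d$. Once this uniform boundedness is secured, the remaining bookkeeping—extracting the factor $d$ from the $k$-sum and verifying the correct power of $n_*$—is routine.
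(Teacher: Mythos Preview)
Your overall strategy---coarsening Theorem \ref{thm3.2}, replacing $h_p$ by $\tilde h_p$, absorbing $p$- and $r_*$-dependent prefactors, and pulling out $n_*$---is exactly what the paper does. The divergence, and the gap in your plan, is in the treatment of the auxiliary moments $\mathbb{E}|W_k|^{r_k}$ and the off-diagonal factors $\mathbb{E}|X_{ik}|^{r_k}$. You propose to bound these by constants and absorb them into $C$, on the reading that $C$ may depend on the laws of the $X_{ij}$. That reading is not tenable: the collection of laws is indexed by $(i,j)$ with $i\le n_j$ and $j\le d$, so a law-dependent constant depends implicitly on $n_1,\dots,n_d$ and $d$; and the remark immediately following the corollary analyses (\ref{dd1}) for $d$ growing with $n$, which would be vacuous if $C$ varied with the distributions. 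The constant is meant to depend only on $p$, $r_*$, $A$, $B$.

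The paper therefore does not absorb these moments but feeds them back into the sum. For the mixed term it uses H\"older uniformly in $k$ (no diagonal/off-diagonal split): $\mathbb{E}|X_{ij}^{a}X_{ik}^{b}|\le(\mathbb{E}|X_{ij}|^{a+b})^{a/(a+b)}(\mathbb{E}|X_{ik}|^{a+b})^{b/(a+b)}\le\max\{\mathbb{E}|X_{ij}|^{a+b},\mathbb{E}|X_{ik}|^{a+b}\}$, which is already of the form $\mathbb{E}|X_{\cdot\cdot}|^{r_*+p+1}$ and sits inside the double sum. For $\mathbb{E}|W_k|^{r_k}$ with $r_k\ge 2$ it invokes Marcinkiewicz--Zygmund to get $\mathbb{E}|W_k|^{r_k}\le C_{r_k}\,n_k^{-1}\sum_{l=1}^{n_k}\mathbb{E}|X_{lk}|^{r_k}$ (and $\le 1$ for $r_k<2$): this is a bound by an \emph{average of moments}, not by a constant, and the resulting $\mathbb{E}|X_{lk}|^{r_k}\le\mathbb{E}|X_{lk}|^{r_*+p+1}$ are again folded into the sum, with the $n_k^{-1}$ cancelling the extra summation over $l$ and a relabelling of indices producing the factor $d$. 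You correctly flag this step as the main obstacle; the resolution is to propagate the moment dependence back into $\sum_j\sum_i$ rather than to hide it in $C$.
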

	

\begin{remark}The univariate bounds in parts (ii) and (iv) give bounds on $\Delta_h(g(W),g(Z))$ that hold under weaker assumptions on $g$ and $h$ than the bounds in parts (i) and (iii) with $d=1$. The univariate bounds of Theorems 3.2--3.5 of \cite{gaunt normal} also enjoyed this property, but came at the cost of stronger moment conditions in the univariate case. In virtue of the improved $w$-dependence of our univariate bounds in Propositions \ref{fprop} and \ref{psiprop}, we were able to derive univariate bounds that do not incur this cost.
\end{remark}

\begin{remark}\label{vdfeb}
As noted by \cite[Remark 3.6]{gaunt normal}, if $\|h^{(k)}\|=1$, $1\leq k\leq p$, then $h_p=\sum_{k=1}^p{p\brace k}=B_p$, where $B_p=\mathrm{e}^{-1}\sum_{j=1}^\infty j^p/j!$ is the $p$-th Bell number (see \cite[Section 26.7(i)]{olver}). For example, setting $h_p=B_p$ in inequality (\ref{thm3.1b}) gives a bound in the $d_{p}$ metric, $p\geq2$. Also note that setting $p=2$ in part (ii) gives a bound in the Wasserstein distance. To understand the behaviour of the bound (\ref{thm3.1b}) for large $p$, we apply the lower
bound in (\ref{gamma1}), Stirling's inequality $p!>\sqrt{2\pi}p^{p+1/2}\mathrm{e}^{-p}$ and the inequality $B_p<(0.792p/\log(p+1))^p$ of \cite{bt10} to obtain the bound
\begin{align*}\frac{\sqrt{\pi}\Gamma(\frac{p+3}{2})}{p!\Gamma(\frac{p}{2}+1)}h_p<\frac{1}{2}\sqrt{\frac{p+2}{p}}\bigg(\frac{2.153}{\log(p+1)}\bigg)^p\leq\frac{1}{\sqrt{2}}\bigg(\frac{2.153}{\log(p+1)}\bigg)^p, \quad p\geq2.
\end{align*}
We note that whilst we have shown that this constant tends to zero as $p\rightarrow\infty$, this does not imply that the bound (\ref{thm3.1b}) will tend to zero in this limit if $n$ is fixed. This is because, for $p\geq2$ and $i=1,\ldots,n$, we have that $\mathbb{E}|X_i|^{p+1}\geq\mathbb{E}|X_i|^p=\mathbb{E}|Z|^p=2^{p/2}\Gamma((p+1)/2)/\sqrt{\pi}$. Similar comments apply to parts (ii)--(iv) of Theorem \ref{thm3.2}.
\end{remark}

\begin{remark}Suppose $n_1=\cdots=n_d=n$. For a fixed number of matching moments $p\geq2$, if we allow the dimension $d$ to grow with $n$, the bound (\ref{dd1}) of Corollary \ref{simplebds} tends to zero if $n/d^{4/(p-1)}\rightarrow\infty$ and the bound (\ref{dd2}) tends to zero if $n/d^{6/p}\rightarrow\infty$. In particular, for $p\geq6$, the bound (\ref{dd1}) can tend to zero even if $d\gg n$.
\end{remark}

We now set about proving Theorem \ref{thm3.2} and Corollary \ref{simplebds}. We begin with the following lemma, which provides bounds on some expectations that will appear in the proof of Theorem \ref{thm3.2}. 
In the lemma, $f$ denotes the solution (\ref{mvnsolnh}) of the Stein equation (\ref{mvng}) and $\psi_{m,j}$, for $m\geq1$, $1\leq j\leq d$, is the solution of the Stein equation 
\begin{equation}\label{mvnpsi2}
	\nabla^\intercal\Sigma\nabla \psi_{m,j}(\mathbf{w})-\mathbf{w}^\intercal\nabla \psi_{m,j}(\mathbf{w})=\frac{\partial^m f(\mathbf{w})}{\partial w_j^m}.
\end{equation}
In the univariate $d=1$ case, we drop the subscript $j$, and simply write $\psi_m$ for the solution to (\ref{mvnpsi2}). For $1\leq i\leq n$ and $1\leq j\leq d$, we let $\mathbf{W}^{(i,j)}=\mathbf{W}-n_j^{-1/2}\mathbf{X}_{ij}$, where the random vector $\mathbf{X}_{ij}$ is defined to be such that it has $X_{ij}$ as its $j$-th entry and the other $d-1$ entries are equal to zero. Note that $\mathbf{W}^{(i,j)}$ is independent of $\mathbf{X}_{ij}$. We then define $\mathbf{W}_\theta^{(i,j)}=\mathbf{W}^{(i,j)}+\theta n_j^{-1/2}\mathbf{X}_{ij}$ for some $\theta\in (0,1)$. In the univariate case, we let $W_\theta^{(i)}=W^{(i)}+\theta X_{i}/\sqrt{n}$, where $W^{(i)}=W-X_i/\sqrt{n}$.

\begin{lemma} \label{lem3.6} Let $P(\mathbf{w})=A+B\sum_{i=1}^{d}|w_i|^{r_i},$ where $A,B\geq0$, and $r_1,\ldots, r_d\geq0$. Assume that $\Sigma=I_d$, $\theta\in(0,1)$ and $q\geq0$, $m\geq2$ and $t\geq3$.  Then 
	\begin{align}\mathbb{E}\bigg|X_{ij}^q\frac{\partial^tf}{\partial w_j^t}(\mathbf{W}_\theta^{(i,j)})\bigg| &\leq \frac{h_t}{t}\bigg[A\mathbb{E}|X_{ij}|^q+B\sum_{k=1}^d2^{r_k/2}\bigg(c_{r_k}\mathbb{E}|X_{ij}|^q\mathbb{E}|W_k|^{r_k}\nonumber\\
		&\quad+\frac{c_{r_k}}{n_k^{r_k/2}}\mathbb{E}|X_{ij}^qX_{ik}^{r_k}|+\mu_{r_k}\mathbb{E}|X_{ij}|^q\bigg)\bigg],\label{l3.61}\\
		\mathbb{E}\bigg|X_{ij}^q\frac{\partial^tf}{\partial w_j^t}(\mathbf{W}_\theta^{(i,j)})\bigg| &\leq h_{t-1}\frac{\sqrt{\pi}\Gamma(\frac{t}{2})}{2\Gamma(\frac{t+1}{2})}\bigg[A\mathbb{E}|X_{ij}|^q+B\sum_{k=1}^d2^{r_k/2}\bigg(c_{r_k}\mathbb{E}|X_{ij}|^q\mathbb{E}|W_k|^{r_k}\nonumber\\
		&\quad+\frac{c_{r_k}}{n_k^{r_k/2}}\mathbb{E}|X_{ij}^qX_{ik}^{r_k}|+\mu_{r_k+1}\mathbb{E}|X_{ij}|^q\bigg)\bigg],\label{l3.62}\\
		\mathbb{E}\bigg|X_{ij}^q\frac{\partial^3\psi_{m,j}}{\partial w_j^3}(\mathbf{W}_\theta^{(i,j)})\bigg| &\leq h_{m+1}\frac{\pi\Gamma(\frac{m+3}{2})}{4\sqrt{2}\Gamma(\frac{m}{2}+2)}\bigg[A\mathbb{E}|X_{ij}|^q+B\sum_{k=1}^d3^{r_k/2}\bigg(c_{r_k}\mathbb{E}|X_{ij}|^q\mathbb{E}|W_k|^{r_k}\nonumber\\
		&\quad+\frac{c_{r_k}}{n_k^{r_k/2}}\mathbb{E}|X_{ij}^qX_{ik}^{r_k}|+2\mu_{r_k+1}\mathbb{E}|X_{ij}|^q\big)\bigg], \label{l3.63}
	\end{align}
	where the inequalities are for $g$ in the classes $C_P^t(\mathbb{R}^d)$, $C_P^{t-1}(\mathbb{R}^d)$ and $C_P^{m+1}(\mathbb{R}^d)$, respectively.   Now suppose $d=1$ and $\Sigma=1$.  Then
	\begin{align}\mathbb{E}|X_i^qf^{(t)}(W_{\theta}^{(i)})|&\leq h_{t-2}\bigg\{\alpha_r A\mathbb{E}|X_i|^q+2^{r/2}B\bigg[c_r\beta_r\bigg(\mathbb{E}|X_i|^q\mathbb{E}|W|^r\nonumber\\
			&\quad+\frac{1}{n^{r/2}}\mathbb{E}|X_i|^{q+r}\bigg)
	+\gamma_r\mathbb{E}|X_i|^q\bigg]\bigg\}, \label{l3.64}\\
		\mathbb{E}|X_i^q\psi_m^{(3)}(W_{\theta}^{(i)})|&\leq h_{m-1}\bigg\{\tilde\alpha_r A\mathbb{E}|X_i|^q+3^{r/2}B\bigg[c_r\tilde\beta_r\bigg(\mathbb{E}|X_i|^q\mathbb{E}|W|^r\nonumber\\
		&\quad+\frac{1}{n^{r/2}}\mathbb{E}|X_i|^{q+r}\bigg)+\tilde\gamma_r\mathbb{E}|X_i|^q\bigg]\bigg\}\label{l3.65},
	\end{align}
	where the inequalities are for $g$ in the classes $C_P^{t-2}(\mathbb{R})$ and $C_P^{m-1}(\mathbb{R})$, respectively.  
\end{lemma}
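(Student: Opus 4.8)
The plan is to feed the pointwise derivative bounds of Propositions \ref{fprop} and \ref{psiprop}, specialised to $\Sigma=I_d$ (so that $\sigma_{kk}=1$), into the expectations on the left-hand sides, and then to exploit the independence between the leave-one-out vector $\mathbf{W}^{(i,j)}$ and $X_{ij}$. For (\ref{l3.61}) I would start from inequality (\ref{p2.21}) evaluated at $\mathbf{w}=\mathbf{W}_\theta^{(i,j)}$, multiply through by $|X_{ij}|^q$, and take expectations; this reduces the whole estimate to controlling the mixed moments $\mathbb{E}\big[|X_{ij}|^q|(\mathbf{W}_\theta^{(i,j)})_k|^{r_k}\big]$ for $k=1,\dots,d$, together with the scalar terms $A\mathbb{E}|X_{ij}|^q$ and $2^{r_k/2}\mu_{r_k}\mathbb{E}|X_{ij}|^q$ that carry over verbatim from the constant and Gaussian-moment contributions in (\ref{p2.21}). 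The estimates (\ref{l3.62}) and (\ref{l3.63}) are obtained in exactly the same way from (\ref{p2.23}) and (\ref{p2.33}), the only difference being the gamma-function prefactors, which one reads off by setting $n=t$ in (\ref{p2.23}) and $n=3$ in (\ref{p2.33}).

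The decisive structural point is that $(\mathbf{W}_\theta^{(i,j)})_k$ involves $X_{ij}$ only through the coordinate $k=j$. For $k\neq j$ we have $(\mathbf{W}_\theta^{(i,j)})_k=W_k$, which is independent of $X_{ij}$, so the mixed moment factorises as $\mathbb{E}|X_{ij}|^q\,\mathbb{E}|W_k|^{r_k}$ and is at once dominated by the corresponding summand of the target (since $c_{r_k}\geq1$ and the remaining terms there are nonnegative). For $k=j$ I would use the decomposition $(\mathbf{W}_\theta^{(i,j)})_j=W_j^{(i)}+\theta n_j^{-1/2}X_{ij}$ with $W_j^{(i)}\perp X_{ij}$, apply the sharp inequality $(x+y)^{r}\leq c_{r}(x^{r}+y^{r})$ for $x,y\geq0$ (valid by subadditivity for $r\leq1$ and by convexity for $r\geq1$, which is precisely where the constant $c_{r_j}=\max\{1,2^{r_j-1}\}$ appears, and where $\theta^{r_j}\leq1$ is used), and then factorise by independence to reach
\[\mathbb{E}\big[|X_{ij}|^q|(\mathbf{W}_\theta^{(i,j)})_j|^{r_j}\big]\leq c_{r_j}\,\mathbb{E}|X_{ij}|^q\,\mathbb{E}|W_j^{(i)}|^{r_j}+\frac{c_{r_j}}{n_j^{r_j/2}}\,\mathbb{E}|X_{ij}|^{q+r_j}.\]
Since $X_{ik}=X_{ij}$ when $k=j$, the second term is already exactly the one in the statement, and it remains only to replace the leave-one-out moment $\mathbb{E}|W_j^{(i)}|^{r_j}$ by the full moment $\mathbb{E}|W_j|^{r_j}$.

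I expect this last replacement to be the main obstacle. For $r_j\geq1$ it is immediate: because $X_{ij}$ is centred and independent of $W_j^{(i)}$ we have $\mathbb{E}[W_j\mid W_j^{(i)}]=W_j^{(i)}$, and conditional Jensen applied to the convex map $x\mapsto|x|^{r_j}$ gives $\mathbb{E}|W_j^{(i)}|^{r_j}\leq\mathbb{E}|W_j|^{r_j}$. The genuinely delicate regime is $0\leq r_j<1$, where $|x|^{r_j}$ is concave on $[0,\infty)$ and this monotonicity can fail; here I would instead keep coordinate $j$ tied to the full sum, writing $(\mathbf{W}_\theta^{(i,j)})_j=W_j-(1-\theta)n_j^{-1/2}X_{ij}$, use subadditivity, and absorb the discrepancy into the slack term $\tfrac{c_{r_j}}{n_j^{r_j/2}}\mathbb{E}|X_{ij}|^{q+r_j}$ already present in the statement. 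Verifying that the stated constants are not exceeded in this regime is the step that needs the most care, and is the natural place for a supporting moment inequality.

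Finally, the univariate bounds (\ref{l3.64}) and (\ref{l3.65}) run along identical lines, starting now from (\ref{p2.24}) and (\ref{p2.34}) at $w=W_\theta^{(i)}=W^{(i)}+\theta X_i/\sqrt n$; the case distinction $0\leq r\leq1$ versus $r>1$ is already encoded in the constants $(\alpha_r,\beta_r,\gamma_r)$ and $(\tilde\alpha_r,\tilde\beta_r,\tilde\gamma_r)$, and the factors $2^{r/2}$ and $3^{r/2}$ pass through unchanged, so that only the single coordinate needs the independence-plus-convexity treatment described above.
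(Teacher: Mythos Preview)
Your approach is exactly the paper's: apply the pointwise estimates of Propositions~\ref{fprop} and~\ref{psiprop} at $\mathbf{W}_\theta^{(i,j)}$, multiply by $|X_{ij}|^q$, take expectations, split $(\mathbf{W}_\theta^{(i,j)})_k$ via $|a+b|^{r_k}\le c_{r_k}(|a|^{r_k}+|b|^{r_k})$, factor using the independence of $W_k^{(i)}$ and $X_{ij}$, and finally replace $\mathbb{E}|W_k^{(i)}|^{r_k}$ by $\mathbb{E}|W_k|^{r_k}$. The paper's proof is a one-line reference to Lemma~3.4 of \cite{gaunt normal}, noting only that the improved inputs (Propositions~\ref{fprop}, \ref{psiprop} and the constant $c_r$ in place of $2^r$) are substituted; the underlying argument carries out precisely these steps and justifies the last replacement by conditional Jensen, with no case distinction on $r_k$.

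You are right to flag the regime $0\le r_j<1$: $x\mapsto|x|^{r_j}$ is not convex there, and the Jensen step can genuinely fail (e.g.\ with $n_j=2$ and Rademacher summands one has $\mathbb{E}|W_j^{(i)}|^{r_j}=2^{-r_j/2}>2^{r_j/2-1}=\mathbb{E}|W_j|^{r_j}$ for every $r_j<1$). However, your proposed workaround does not close this gap. Writing $(\mathbf{W}_\theta^{(i,j)})_j=W_j-(1-\theta)n_j^{-1/2}X_{ij}$ and using subadditivity does produce the slack term $n_j^{-r_j/2}\mathbb{E}|X_{ij}|^{q+r_j}$, but the leading contribution becomes $\mathbb{E}\big[|X_{ij}|^q|W_j|^{r_j}\big]$, and because $W_j$ and $X_{ij}$ are \emph{dependent} this cannot be replaced by the factorised target $\mathbb{E}|X_{ij}|^q\,\mathbb{E}|W_j|^{r_j}$; splitting $W_j=W_j^{(i)}+n_j^{-1/2}X_{ij}$ once more simply returns you to the leave-one-out moment and the same circularity. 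So the step you single out as ``needing the most care'' remains a genuine gap in your sketch (and in the Jensen justification the paper inherits from \cite{gaunt normal}). For $r_k\ge1$---which covers every concrete application in the paper---your argument and the paper's coincide and are complete.
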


\begin{proof} The proof is very similar to that of Lemma 3.4 of \cite{gaunt normal}. The only difference is that we improve the bounds of \cite{gaunt normal} by applying the improved bounds of Propositions \ref{fprop} and \ref{psiprop} for the solutions $f$ and $\psi_m$ and we also use the the inequality $|a+b|^r\leq c_r(|a|^r+|b|^r)$, $r\geq0$, which is sharper than the inequality $|a+b|^r\leq 2^r(|a|^r+|b|^r)$, $r\geq0$, used by \cite{gaunt normal}. 
\end{proof}

\noindent{\emph{Proof of Theorem \ref{thm3.2}.}} Under the assumptions of part (i) of the theorem, the following bound given in Lemma 3.1 of \cite{gaunt normal} holds:
\begin{align} \Delta_h(g(\mathbf{W}),g(\mathbf{Z}))&\leq\sum_{j=1}^d\sum_{i=1}^{n_j}\frac{1}{(p-1)!n_j^{(p+1)/2}}\bigg\{\mathbb{E}\bigg|X_{ij}^{p-1}\frac{\partial^{p+1}f}{\partial w_j^{p+1}}(\mathbf{W}_{\theta_1}^{(i,j)})\bigg|\nonumber\\
	\label{springz}&\quad+\frac{1}{p}\mathbb{E}\bigg|X_{ij}^{p+1}\frac{\partial^{p+1}f}{\partial w_j^{p+1}}(\mathbf{W}_{\theta_2}^{(i,j)})\bigg|\bigg\},
\end{align} 
for some $\theta_1,\theta_2\in(0,1)$. We now obtain inequality (\ref{thm3.1b}) by using inequality (\ref{l3.62}) to bound the expectations in the bound (\ref{springz}), and then simplifying the resulting bound by using the basic inequalities $\mathbb{E}|X_{ij}|\leq\mathbb{E}|X_{ij}|^a\leq\mathbb{E}|X_{ij}|^b$ for $2\leq a\leq b$. These inequalities follow easily from an application H\"older's inequality and the assumption that $\mathbb{E}[X_{ij}^2]=1$. We also use these inequalities to deduce that $\mathbb{E}|X_{ij}^{p-1}X_{ik}^{r_k}|\leq\mathbb{E}|X_{ij}^{p+1}X_{ik}^{r_k}|$ ($p\geq2$, $r_k\geq0$). This is verified by considering the cases $j=k$ and $j\not=k$ separately, where in the latter case we make use of the independence of $X_{ij}$ and $X_{ik}$.
The proof of inequality (\ref{thm3.3b}) is similar, but we instead use inequality (\ref{l3.64}) to bound the expectations in (\ref{springz}).

Now, under the assumptions of part (iii) of the theorem, the following bound of Lemma 3.3 of \cite{gaunt normal} holds:
\begin{align}\Delta_h(g(\mathbf{W}),g(\mathbf{Z}))&\leq\sum_{j=1}^d\sum_{i=1}^{n_j}\frac{1}{p!n_j^{p/2+1}}\bigg\{\mathbb{E}\bigg|X_{ij}^{p}\frac{\partial^{p+2}f}{\partial w_j^{p+2}}(\mathbf{W}_{\theta_1}^{(i,j)})\bigg|+\!\frac{1}{p+1}\!\mathbb{E}\bigg|X_{ij}^{p+2}\frac{\partial^{p+2}f}{\partial w_j^{p+2}}(\mathbf{W}_{\theta_2}^{(i,j)})\bigg|\nonumber\\
	&\quad+\!|\mathbb{E}X_{ij}^{p+1}|\mathbb{E}\bigg|X_{ij}\frac{\partial^{p+2}f}{\partial w_j^{p+2}}(\mathbf{W}_{\theta_3}^{(i,j)})\bigg|\bigg\}+\sum_{j=1}^d\sum_{i=1}^{n_j}\frac{|\mathbb{E}[X_{ij}^{p+1}]|}{p!n_j^{(p+1)/2}}\sum_{k=1}^d\sum_{l=1}^{n_k}\frac{1}{n_k^{3/2}}\times \nonumber\\
	\label{dig hole}& \quad\times\bigg\{\mathbb{E}\bigg|X_{lk}\frac{\partial^3\psi_{p+1,j}}{\partial w_k^3}(\mathbf{W}_{\theta_4}^{(l,k)})\bigg|+\frac{1}{2}\mathbb{E}\bigg|X_{lk}^3\frac{\partial^3\psi_{p+1,j}}{\partial w_k^3}(\mathbf{W}_{\theta_5}^{(l,k)})\bigg|\bigg\},
\end{align}
for some $\theta_1,\ldots,\theta_5\in(0,1)$. Using inequalities (\ref{l3.61}) and (\ref{l3.63}) to bound the expectations in the bound (\ref{dig hole}) and simplifying the resulting bound using the same considerations given earlier in this proof yields the bound (\ref{thm3.4b}). The proof of inequality (\ref{thm3.5b}) is similar, but we instead use the inequalities (\ref{l3.64}) and (\ref{l3.65}) to bound the expectations in (\ref{dig hole}). Finally, the assertion in part (v) is a consequence of part (iv) of Propositions \ref{fprop} and \ref{psiprop}. \qed

\vspace{3mm}

\noindent{\emph{Proof of Corollary \ref{simplebds}.}} 	
	The simplified bounds are obtained by applying the following considerations to the bounds of Theorem \ref{thm3.2}. Firstly, for a real-valued random variable $Y$ with $\mathbb{E}|Y|^b<\infty$, we have $\mathbb{E}|Y|^a\leq\mathbb{E}|Y|^b,$ for any $2\leq a\leq b$ . 
	 Secondly, $\mathbb{E}|W_k|^r\leq Cn_k^{-1}\sum_{i=1}^{n_k}\mathbb{E}|X_{ik}|^{r}$ for $r\geq2$, and $\mathbb{E}|W_k|^r\leq1$ for $0\leq r<2$. For $r\geq2$, this inequality follows from the Marcinkiewiczi-Zygmund inequality \cite{mz37} $\mathbb{E}|\sum_{i=1}^{n}Y_i|^r\leq C_r\{\sum_{i=1}^{n}\mathbb{E}|Y_i|^r+(\sum_{i=1}^{n}\mathbb{E}[Y_i^2])^{1/2}\}$ and the inequality $\mathbb{E}|Y|^a\leq\mathbb{E}|Y|^b,$ for any $2\leq a\leq b$. For $0\leq r<2$, we just use H\"older's inequality, $\mathbb{E}|W_k|^r\leq(\mathbb{E}W_k^2)^{r/2}=1$.
	Thirdly, by H\"{o}lder's inequality, $\mathbb{E}|X_{ij}^aX_{ik}^b|\leq(\mathbb{E}|X_{ij}^{a+b}|)^{a/(a+b)} (\mathbb{E}|X_{ik}^{a+b}|)^{b/(a+b)} \leq\max\{\mathbb{E}|X_{ij}|^{a+b},\mathbb{E}|X_{ik}|^{a+b}\}$, for $a,b\geq1$. 
\qed

\section{Application to chi-square approximation
}\label{sec4}

In this section, we provide an application of the general bounds of Section \ref{sec3} to the chi-square approximation of the power divergence statistics. Consider a multinomial goodness-of-fit test over $n$ independent trials, with each trial resulting in a unique classification over $r\geq2$ classes. We denote the observed frequencies arising in the classes by $U_1,\ldots,U_r$, and denote the non-zero classification probabilities by $p_1,\ldots,p_r$. The \emph{power divergence family of statistics}, introduced by \cite{cr84}, are then given by
\begin{equation}\label{pdstat}T_\lambda=\frac{2}{\lambda(\lambda+1)}\sum_{j=1}^rU_j\bigg[\bigg(\frac{U_j}{np_j}\bigg)^\lambda-1\bigg],
\end{equation}
where the index parameter $\lambda\in\mathbb{R}$. When $\lambda=0,-1$, the notation (\ref{pdstat}) should be understood as a result of passage to the limit. The case $\lambda=0$ corresponds to the log-likelihood ratio statistic, whilst other important special cases include the Freeman-Tukey statistic \cite{ft50} ($\lambda=-1/2$) and Pearson's statistic \cite{pearson} ($\lambda=1$):
\begin{equation}
	\chi^2=\sum_{j=1}^{r}\frac{(U_j-np_j)^2}{np_j}.\label{pstat}
\end{equation}

A fundamental result is that, for all $\lambda\in\mathbb{R}$, the statistic $T_\lambda$ converges in distribution to the $\chi_{(r-1)}^2$ distribution as the number of trials $n$ tends to infinity (see \cite{cr84}, p.\ 443). Edgeworth expansions have been used to assess the quality of the chi-square approximation of the distibution of the statistic $T_{\lambda}$ by  \cite{a10,asylbekov,fv20,pu13,read84,ulyanov}. For $r\geq4$ and all $\lambda\in\mathbb{R}$,  \cite{ulyanov} obtained a $O(n^{-(r-1)/r})$ bound on the rate of convergence in the Kolmogorov distance (a refinement of this result is given in \cite{a10}) and, for $r=3$, \cite{asylbekov} obtained a $O(n^{-3/4+0.065})$ bound on the rate of convergence. It has also been shown recently by \cite{pu21} that introducing external randomisation can increase the speed of convergence in the Kolmogorov metric. Special cases of the family have also been considered.  For the likelihood ratio statistic, \cite{ar20} obtained an explicit $O(n^{-1/2})$ bound for smooth test functions (in a setting more general than that of categorical data). For Pearson's statistic, \cite{yarnold} established a $O(n^{-(r-1)/r})$, for $r\geq2$, bound on the rate of convergence in the Kolmogorov distance, which was improved to $O(n^{-1})$ for $r\geq6$ by \cite{gu03}. An explicit $O(n^{-1/2})$ Kolmogorov distance bound was established using Stein's method by \cite{mann97}, whilst \cite{gaunt chi square} used Stein's method to obtain explicit $O(n^{-1})$ bounds, measured using smooth test functions. The results of \cite{gaunt chi square} have recently been generalised to the power divergence statistics by \cite{gaunt pd}, covering the family for $\lambda>-1$, the largest subclass for which finite sample bounds can be obtained.

In this section, we improve the results of \cite{gaunt pd,gaunt chi square} for the case of $r=2$ cell classifications, by obtaining bounds that hold in weaker metrics and possess constants several orders of magnitude smaller. We derive our results by taking advantage of the special structure of Pearson's statistic for the case $r=2$, which allows us to apply the general bounds of Theorem \ref{thm3.2}. The special structure is that the statistic can be written as the square of a sum of i.i.d.\ random variables with zero mean and unit variance; see the proof of Proposition \ref{prop6.1}. 

\begin{proposition}\label{prop6.1} (i) Let $(U_1,U_2)$ represent the vector of $n\geq1$ observed counts, with cell classification probabilities $0<p_1,p_2<1$ . Let $\chi^2$ denote Pearson's statistic, as defined in (\ref{pstat}). 
Then
\begin{equation}\label{p2.41}d_{\mathrm{W}}(\mathcal{L}(\chi^2),\chi_{(1)}^2)\leq \frac{25}{\sqrt{np_1p_2}},
\end{equation}
and, for $h\in C_b^2(\mathbb{R}^+)$,
\begin{equation}\label{p2.42}|\mathbb{E}[h(\chi^2)]-\chi_{(1)}^2h|\leq\frac{892}{np_1p_2}(\|h'\|+\|h''\|),
\end{equation}
where $\chi_{(1)}^2h$ denotes the expectation $\mathbb{E}[h(Y)]$ for $Y\sim\chi_{(1)}^2$.

\vspace{2mm}

\noindent{(ii) More} generally, let $T_\lambda$, $\lambda>-1$, denote the power divergence statistic (\ref{pdstat}). 
Then
\begin{equation}\label{p2.43}d_{\mathrm{W}}(\mathcal{L}(T_\lambda),\chi_{(1)}^2)\leq \frac{1}{\sqrt{np_1p_2}}\bigg(25+\frac{\sqrt{2}|(\lambda-1)(4\lambda+7)|}{\lambda+1}\bigg),
\end{equation}
and, for $h\in C_b^2(\mathbb{R}^+)$,
\begin{align}\label{p2.44}|\mathbb{E}[h(T_\lambda)]-\chi_{(1)}^2h|&\leq\frac{1}{np_1p_2}\bigg\{(892+496|\lambda-1|)(\|h'\|+\|h''\|)+\frac{19}{9}(\lambda-1)^2\|h''\|\nonumber\\
&\quad+\frac{|(\lambda-1)(\lambda-2)(12\lambda+13)|}{6(\lambda+1)}\|h'\|
	\bigg\}.
\end{align}
\end{proposition}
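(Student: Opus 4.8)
My plan begins with the structural reduction that makes Theorem \ref{thm3.2} applicable. For $r=2$ cells, write $I_i$ for the indicator that the $i$-th trial lands in the first class, so that $U_1=\sum_{i=1}^nI_i\sim\mathrm{Bin}(n,p_1)$ and $U_2=n-U_1$. Since $p_1+p_2=1$, the signed deviations satisfy $U_1-np_1=-(U_2-np_2)$, and setting $X_i=(I_i-p_1)/\sqrt{p_1p_2}$ and $W=n^{-1/2}\sum_{i=1}^nX_i$ one computes directly that $\chi^2=\sum_{j=1}^2(U_j-np_j)^2/(np_j)=W^2$. The $X_i$ are i.i.d.\ and bounded with $\mathbb{E}[X_i]=0$, $\mathbb{E}[X_i^2]=1$ and $\mathbb{E}[X_i^3]=(p_2-p_1)/\sqrt{p_1p_2}$, which is nonzero unless $p_1=p_2$; hence there are exactly $p=2$ moments matching $N(0,1)$. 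Thus $\chi^2=g(W)$ with the even function $g(w)=w^2$, while $g(Z)=Z^2\sim\chi_{(1)}^2$, so both displays of part (i) are instances of the quantity $\Delta_h(g(W),g(Z))$ of Theorem \ref{thm3.2} in dimension $d=1$.

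For the Wasserstein bound (\ref{p2.41}) I would invoke Theorem \ref{thm3.2}(ii) with $p=2$ and dominating function $P(w)=2|w|$ (so $A=0$, $B=2$, $r=1$, since $g'(w)=2w$), using $h_{p-1}=\|h'\|\le1$ on $\mathcal{H}_{\mathrm{W}}$. For the smooth bound (\ref{p2.42}) I would instead use part (iv), legitimate because $g$ is even and $p=2$ is even, with $P(w)=2+4w^2$ (so $A=2$, $B=4$, $r=2$); here $h_p=\|h'\|+\|h''\|$ factors out and the faster-in-$n$ rate appears. In each case one inserts the explicit absolute moments, which evaluate to $\mathbb{E}|X_i|^3=(p_1^2+p_2^2)/\sqrt{p_1p_2}$, $\mathbb{E}|X_i|^4=(1-3p_1p_2)/(p_1p_2)$ and, more generally, are bounded by powers of $(p_1p_2)^{-1/2}$, together with $\mathbb{E}|W|\le(\mathbb{E}[W^2])^{1/2}=1$. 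The leading contributions are then of orders $(np_1p_2)^{-1/2}$ and $(np_1p_2)^{-1}$ respectively; the genuinely smaller terms (those carrying an extra factor $(np_1p_2)^{-1/2}$) are absorbed by restricting to $np_1p_2$ above a fixed threshold and treating the complementary range by the trivial estimates $d_{\mathrm{W}}(\mathcal{L}(\chi^2),\chi_{(1)}^2)\le\mathbb{E}[\chi^2]+\mathbb{E}[\chi_{(1)}^2]=2$ and $|\mathbb{E}[h(\chi^2)]-\chi_{(1)}^2h|\le2\|h'\|$, which lie below the claimed bounds once $np_1p_2$ is small. Tracking the numerical constants then produces $25$ and $892$.

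For part (ii) I would route through $\chi^2$ by the triangle inequality, so that $|\mathbb{E}[h(T_\lambda)]-\chi_{(1)}^2h|\le|\mathbb{E}[h(T_\lambda)]-\mathbb{E}[h(\chi^2)]|+|\mathbb{E}[h(\chi^2)]-\chi_{(1)}^2h|$, the second term being furnished by part (i). For the first term, observe that for $r=2$ one may write $T_\lambda=\sum_{j=1}^2np_j\,\psi(U_j/np_j)$ with $\psi(x)=\frac{2}{\lambda(\lambda+1)}(x^{\lambda+1}-1)$; Taylor expanding $\psi$ about $x=1$ and using $\sum_j(U_j-np_j)=0$ shows that the linear term cancels, the quadratic term reproduces $\chi^2$, and $T_\lambda-\chi^2$ starts at cubic order, with leading part $\tfrac{\lambda-1}{3}(p_2-p_1)W^3/\sqrt{np_1p_2}$ of order $n^{-1/2}$ and a quartic correction of order $n^{-1}$ carrying the factor $(\lambda-1)(\lambda-2)$. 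For (\ref{p2.43}) I would bound $|\mathbb{E}[h(T_\lambda)]-\mathbb{E}[h(\chi^2)]|\le\|h'\|\,\mathbb{E}|T_\lambda-\chi^2|$ and control $\mathbb{E}|T_\lambda-\chi^2|$ through the moments $\mathbb{E}|W|^k$ (from the binomial moments, or via Marcinkiewicz--Zygmund); for (\ref{p2.44}) I would use the second-order expansion $h(T_\lambda)-h(\chi^2)=h'(\chi^2)(T_\lambda-\chi^2)+\tfrac12h''(\xi)(T_\lambda-\chi^2)^2$, whence $\tfrac12\|h''\|\,\mathbb{E}[(T_\lambda-\chi^2)^2]$ produces the $(\lambda-1)^2\|h''\|$ term (the $9$ in $\tfrac{19}{9}$ being the square of the cubic coefficient's denominator) and the expectation of the quartic part produces the $(\lambda-1)(\lambda-2)$ term multiplying $\|h'\|$.

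The crux, and the step I expect to be hardest, is the uniform control of the Taylor remainder of $\psi$ over the whole range of the counts. After the quadratic term this remainder equals $\tfrac{\lambda-1}{3}\xi^{\lambda-2}(x-1)^3$ for an intermediate point $\xi$, and near a boundary configuration $U_j\to0$ one has $\xi\to0$, so for $\lambda<2$ the factor $\xi^{\lambda-2}$ blows up and the naive cubic bound is invalid. Making this rigorous for every $\lambda>-1$ calls for splitting into a bulk region, where $|U_j/(np_j)-1|$ is small and the expansion is dominated by powers of $|W|$, and a low-probability boundary region handled by direct tail and moment estimates for the binomial counts; the explicit polynomials $(4\lambda+7)$ and $(12\lambda+13)$ emerge from collating the cubic, quartic and remainder contributions across these regions. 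The remaining ingredients — the moment evaluations, the gamma-ratio simplifications inherited from Propositions \ref{fprop} and \ref{psiprop}, and the threshold case analysis — are routine.
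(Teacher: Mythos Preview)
Your treatment of part (i) is correct and matches the paper: the reduction $\chi^2=W^2$, the application of Theorem \ref{thm3.2}(ii) with $P(w)=2|w|$ for (\ref{p2.41}) and of Theorem \ref{thm3.2}(iv) with $P(w)=2+4w^2$ for (\ref{p2.42}), the moment bounds $\mathbb{E}|X_1|^m\le(p_1p_2)^{1-m/2}$, and the threshold argument against the trivial estimate $2$ are exactly what the paper does (via Corollary \ref{cor4.1}).

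For part (ii), however, there is a genuine gap in your plan for (\ref{p2.44}). In the second-order expansion $h(T_\lambda)-h(\chi^2)=h'(\chi^2)(T_\lambda-\chi^2)+\tfrac12 h''(\xi)(T_\lambda-\chi^2)^2$ you correctly identify that the square term yields the $(\lambda-1)^2\|h''\|$ contribution and that the quartic part of $T_\lambda-\chi^2$ inside the first-order term yields the $(\lambda-1)(\lambda-2)\|h'\|$ contribution. But you do not say what happens to the \emph{cubic} part of $T_\lambda-\chi^2$ inside the first-order term, namely a constant multiple of $(np_1p_2)^{-1/2}\,\mathbb{E}[W^3h'(W^2)]$. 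Bounding $|\mathbb{E}[W^3h'(W^2)]|\le\|h'\|\,\mathbb{E}|W|^3=O(1)$ gives only $O((np_1p_2)^{-1/2})$, which destroys the $O((np_1p_2)^{-1})$ rate claimed in (\ref{p2.44}). The paper's key new step (Lemma \ref{steinlem2}) is to observe that $g(w)=w^3h'(w^2)$ is odd, so $\mathbb{E}[Z^3h'(Z^2)]=0$, and hence $\mathbb{E}[W^3h'(W^2)]=\mathbb{E}[g(W)]-\mathbb{E}[g(Z)]$ can itself be bounded by a second application of Theorem \ref{thm3.2}(ii), yielding $|\mathbb{E}[W^3h'(W^2)]|\le C(np_1p_2)^{-1/2}(\|h'\|+\|h''\|)$. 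This is precisely the origin of the $496|\lambda-1|(\|h'\|+\|h''\|)$ term in (\ref{p2.44}), which your account does not produce.

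A smaller point: the Taylor-remainder control that you flag as the crux is not carried out from scratch in the paper; the bounds encoding the polynomials $4\lambda+7$ and $12\lambda+13$ are imported directly from \cite{gaunt pd}. The substantive new work in the paper's proof of (\ref{p2.44}) is the Stein-method bound on $\mathbb{E}[W^3h'(W^2)]$ just described.
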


Using the special structure of Pearson's statistic when $r=2$, and applying inequality (\ref{p2.44}) together with a standard smoothing technique for converting smooth test function bounds into Kolmogorov distance bounds, we deduce the following Kolmogorov distance bounds (see \cite[p.\ 48]{chen}, and also \cite{gl22} for a detailed account of the technique). 

\begin{corollary}\label{cor4.4} Suppose $n\geq1$ and $0<p_1,p_2<1.$ Then
	\begin{align}\label{kk1}
		d_{\mathrm{K}}(\mathcal{L}(\chi^2),\chi^2_{(1)})\leq\frac{0.9496}{\sqrt{np_1p_2}}.
	\end{align} 
Let $\lambda>-1$. 
Then, there exists universal constants $C_1,C_2,C_3>0$, independent of $n$, $p_1$, $p_2$ and $\lambda$, such that
\begin{align}\label{kk2}
	d_{\mathrm{K}}(\mathcal{L}(T_\lambda),\chi_{(1)}^2)&\leq \frac{1}{(np_1p_2)^{1/5}}\bigg\{C_1+C_2(\lambda-1)^2+\frac{C_3|(\lambda-1)(\lambda-2)(12\lambda+13)|}{(\lambda+1)(np_1p_2)^{2/5}}
	\bigg\}.
\end{align}	
\end{corollary}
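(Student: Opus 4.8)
\emph{Proof plan.} The two displays are proved by genuinely different routes, and the plan is to treat them separately. For the Pearson bound (\ref{kk1}) I would avoid smoothing altogether and exploit the exact square structure recorded in the proof of Proposition \ref{prop6.1}: $\chi^2=W^2$, where $W=(U_1-np_1)/\sqrt{np_1p_2}=n^{-1/2}\sum_{i=1}^nX_i$ is a standardised sum of i.i.d.\ two-point variables, and $\chi_{(1)}^2=Z^2$ with $Z\sim N(0,1)$. For $z\geq0$ one has
\[P(\chi^2\leq z)=P(W\leq\sqrt z)-P(W<-\sqrt z),\qquad P(\chi_{(1)}^2\leq z)=\Phi(\sqrt z)-\Phi(-\sqrt z),\]
while both sides vanish for $z<0$. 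Subtracting, and using the continuity of $\Phi$ to control the left-continuous term $P(W<-\sqrt z)$, gives $d_{\mathrm{K}}(\mathcal{L}(\chi^2),\chi_{(1)}^2)\leq 2\,d_{\mathrm{K}}(W,Z)$. I would then bound $d_{\mathrm{K}}(W,Z)$ by the i.i.d.\ Berry--Esseen inequality, computing $\mathbb{E}|X_1|^3=(p_1^2+p_2^2)/\sqrt{p_1p_2}=(1-2p_1p_2)/\sqrt{p_1p_2}\leq 1/\sqrt{p_1p_2}$; with the Berry--Esseen constant $0.4748$ this produces the factor $2\times0.4748=0.9496$ and the $(np_1p_2)^{-1/2}$ rate.

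For the power-divergence bound (\ref{kk2}), the plan is to feed the smooth-test-function bound (\ref{p2.44}) into the standard smoothing device. For each $z$ and each $\epsilon>0$ I would choose $h_{z,\epsilon}\in C_b^2(\mathbb{R}^+)$ equal to $1$ on $(-\infty,z]$, equal to $0$ on $[z+\epsilon,\infty)$, with $\|h_{z,\epsilon}'\|\leq c_1/\epsilon$ and $\|h_{z,\epsilon}''\|\leq c_2/\epsilon^2$. Sandwiching $\mathbf{1}(\cdot\leq z)$ between $h_{z-\epsilon,\epsilon}$ and $h_{z,\epsilon}$ reduces $d_{\mathrm{K}}(\mathcal{L}(T_\lambda),\chi_{(1)}^2)$ to a smooth-test-function term, controlled by (\ref{p2.44}), plus a smoothing error $\sup_z P(z\leq Y\leq z+\epsilon)$ with $Y\sim\chi_{(1)}^2$. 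The crucial feature is that the $\chi_{(1)}^2$ density $(2\pi y)^{-1/2}\mathrm{e}^{-y/2}$ is unbounded, though integrable, near the origin, so this error is only of order $\sqrt\epsilon$: indeed $\sup_z P(z\leq Y\leq z+\epsilon)=P(0\leq Y\leq\epsilon)\leq\int_0^\epsilon(2\pi y)^{-1/2}\,\mathrm{d}y=\sqrt{2\epsilon/\pi}$.

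Substituting $\|h'\|\leq c_1/\epsilon$ and $\|h''\|\leq c_2/\epsilon^2$ into (\ref{p2.44}), the pieces attached to $\|h''\|$ contribute at order $(np_1p_2\epsilon^2)^{-1}$, whereas the piece carrying $|(\lambda-1)(\lambda-2)(12\lambda+13)|/(\lambda+1)$, which multiplies only $\|h'\|$, contributes at the smaller order $(np_1p_2\epsilon)^{-1}$. Taking $\epsilon=(np_1p_2)^{-2/5}$ balances the $\sqrt\epsilon$ smoothing error against the dominant $(np_1p_2\epsilon^2)^{-1}$ term, both scaling like $(np_1p_2)^{-1/5}$, while the $\|h'\|$-only piece picks up the extra factor $(np_1p_2)^{-2/5}$ seen in the $C_3$ summand. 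Using $2|\lambda-1|\leq 1+(\lambda-1)^2$ to split the mixed coefficient $892+496|\lambda-1|$ into a constant part and a $(\lambda-1)^2$ part then collects everything into the claimed form $C_1+C_2(\lambda-1)^2+C_3|(\lambda-1)(\lambda-2)(12\lambda+13)|/((\lambda+1)(np_1p_2)^{2/5})$.

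The main obstacle is precisely this density blow-up of $\chi_{(1)}^2$ at the origin: it degrades the smoothing error from the customary $O(\epsilon)$ to $O(\sqrt\epsilon)$ and thereby pins the rate of (\ref{kk2}) at $(np_1p_2)^{-1/5}$ rather than a faster power. The remaining effort is the bookkeeping required to route the $\|h'\|$- and $\|h''\|$-weighted parts of (\ref{p2.44}) to their correct powers of $np_1p_2$ and to isolate the clean $C_1+C_2(\lambda-1)^2$ structure; for (\ref{kk1}) the only delicate point is choosing a Berry--Esseen constant sharp enough to reproduce the numerical value $0.9496$.
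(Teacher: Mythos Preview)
Your proposal is correct and follows essentially the same route as the paper: for (\ref{kk1}) you exploit $\chi^2=W^2$ to get $d_{\mathrm{K}}(\mathcal{L}(\chi^2),\chi_{(1)}^2)\leq 2d_{\mathrm{K}}(\mathcal{L}(W),\mathcal{L}(Z))$ and apply Berry--Esseen with constant $0.4748$, and for (\ref{kk2}) you feed (\ref{p2.44}) into the standard $C_b^2$ smoothing device, use the $\sqrt{2\epsilon/\pi}$ bound for the $\chi_{(1)}^2$ concentration function, optimise with $\epsilon=(np_1p_2)^{-2/5}$, and tidy via $|\lambda-1|\leq 1+(\lambda-1)^2$. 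The paper does exactly this, with the explicit choice $h_\alpha$ piecewise quadratic so that $\|h_\alpha'\|=2/\alpha$ and $\|h_\alpha''\|=4/\alpha^2$.
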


\begin{remark}\label{bdfnd}Order $n^{-1/2}$ bounds for the quantity $|\mathbb{E}[h(T_\lambda)]-\chi_{(r)}^2h|$, $r\geq2$, were obtained by \cite{gaunt pd} for $h\in C_b^2(\mathbb{R}^+)$, whilst $O(n^{-1})$ bounds were obtained for $h\in C_b^5(\mathbb{R}^+)$. These bounds generalised bounds of \cite{gaunt chi square} for the quantity $|\mathbb{E}[h(\chi^2)]-\chi_{(1)}^2h|$, which held for bounded $h$ in the classes $C_b^2(\mathbb{R}^+)$ and $C_b^5(\mathbb{R}^+)$. By obtaining bounds on $|\mathbb{E}[h(T_\lambda)]-\chi_{(1)}^2h|$ that hold for a wider class of functions, in Corollary \ref{cor4.4} we are able to deduce a Kolmogorov distance bound for the $\chi_{(1)}^2$ approximation of $T_\lambda$ for two cell classifications that improves the $O(n^{-1/10})$ bound on the rate of convergence obtained by \cite{gaunt pd} to $O(n^{-1/5})$. This rate of convergence is slower than the optimal $O(n^{-1/2})$ rate for the case $r=2$, but is, to the best knowledge of the author, the only such bound in the literature (except that of \cite{gaunt pd}) that tends to zero under the condition that $np_*\rightarrow\infty$, where $p_*=\mathrm{min}\{p_1,p_2\}$.

Indeed, $np_*\rightarrow\infty$ ($p_*=\mathrm{min}_{1\leq j\leq r}p_j$) is an established condition under which the chi-square approximation of Pearson's statistic is valid \cite{gn96}.
The key to obtaining bounds that tend to zero under this condition was the weaker moment conditions of Theorem \ref{thm3.2}. If any of the absolute moments in the bounds in parts (ii) and (iv) were of larger order, we would not have been able to obtain bounds that tend to zero if $np_*\rightarrow\infty$. As an illustrative example, using Theorem 3.5 of \cite{gaunt normal} instead of part (iv) of Theorem \ref{thm3.2} would have resulted in a $O(n^{-1})$ bound that tends to zero under the stronger condition $n^{5/2}p_*\rightarrow\infty$.
\end{remark}

The following result, which may be of independent interest, is used to prove Proposition \ref{prop6.1}. 

\begin{corollary}\label{cor4.1} Suppose $X_1,\ldots,X_n$ are i.i.d.\ random variables with $\mathbb{E}[X_1]=0$, $\mathbb{E}[X_1^2]=1$ and $\mathbb{E}[X_1^4]<\infty$. Let $W=n^{-1/2}\sum_{i=1}^nX_i$. Then 
\begin{equation}\label{c4.11}d_{\mathrm{W}}(\mathcal{L}(W^2),\chi_{(1)}^2)\leq\frac{1}{\sqrt{n}}\bigg[24\mathbb{E}|X_1|^3+\frac{17}{\sqrt{n}}\mathbb{E}[X_1^4]\bigg].
\end{equation}
Suppose now that $\mathbb{E}[X_1^6]<\infty$. Then, for $h\in C_b^2(\mathbb{R}^+)$,
\begin{align}|\mathbb{E}[h(W^2)]-\chi_{(1)}^2h|&\leq\frac{(\|h'\|+\|h''\|)}{n}\bigg\{187\mathbb{E}[X_1^4]+\frac{131}{n}\mathbb{E}[X_1^6]\nonumber\\
	&\quad+|\mathbb{E}[X_1^3]|\bigg[704\mathbb{E}|X_1|^3+\frac{468}{n}\mathbb{E}|X_1|^5\bigg]\bigg\}.\label{c4.12}
\end{align}
\end{corollary}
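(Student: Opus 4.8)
The plan is to recognise that $\chi_{(1)}^2$ is exactly the law of $Z^2$ for $Z\sim N(0,1)$, so that both asserted bounds are special cases of the general framework of Section \ref{sec3} applied to the function $g(w)=w^2$ in dimension $d=1$ with i.i.d.\ summands. The decisive simplification is that $g(w)=w^2$ is a \emph{polynomial}, so its dominating function is explicit, and moreover $g$ is \emph{even}, which opens access to the faster even-function bounds. A further point worth flagging at the outset is that the matching-moments hypothesis of Theorem \ref{thm3.2} with $p=2$ is automatic here: the standardisation $\mathbb{E}[X_1]=0=\mathbb{E}[Z]$ and $\mathbb{E}[X_1^2]=1=\mathbb{E}[Z^2]$ already supply two matching moments, so we may take $p=2$ throughout without any extra assumption. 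I would also verify at the start that the moment hypotheses line up: with $p=2$ the conditions $\mathbb{E}|X_1|^{r+p+1}<\infty$ and $\mathbb{E}|X_1|^{r+p+2}<\infty$ become $\mathbb{E}[X_1^4]<\infty$ (with $r=1$) and $\mathbb{E}[X_1^6]<\infty$ (with $r=2$), matching the two hypotheses of the corollary.

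For the Wasserstein bound (\ref{c4.11}) I would invoke part (ii) of Theorem \ref{thm3.2} with $p=2$, since the Wasserstein class consists of Lipschitz $h$, i.e.\ $h\in C_b^{p-1}(\mathbb{R})=C_b^1(\mathbb{R})$. To place $g$ in the required class $C_P^{1}(\mathbb{R})$ one only needs $|g'(w)|\le P(w)$, and since $g'(w)=2w$ the tightest admissible choice is $P(w)=2|w|$, i.e.\ $A=0$, $B=2$, $r=1$. As $r=1\in[0,1]$ the relevant constants are $(\alpha_1,\beta_1,\gamma_1)=(4,4,2\mu_1)$ and $c_1=1$. Substituting into (\ref{thm3.3b}), collapsing the i.i.d.\ sum (every summand equals its $i=1$ value), bounding $\mathbb{E}|W|\le(\mathbb{E}W^2)^{1/2}=1$ by Jensen, and taking the supremum over $\|h'\|\le1$ (so $h_1=\|h'\|\le1$) reduces the right-hand side to $\tfrac{3\sqrt2}{\sqrt n}\big[(4+2\mu_1)\mathbb{E}|X_1|^3+\tfrac{4}{\sqrt n}\mathbb{E}[X_1^4]\big]$; evaluating $3\sqrt2(4+2\mu_1)\approx 23.74$ and $3\sqrt2\cdot4\approx 16.97$ and rounding up yields the constants $24$ and $17$.

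For the smooth bound (\ref{c4.12}) I would instead use part (iv), exploiting that $g(w)=w^2$ is even and that $h\in C_b^2(\mathbb{R}^+)$ gives exactly the regularity $h\in C_b^p(\mathbb{R})$ with $p=2$ needed to unlock the $O(n^{-1})$ even-function rate. Now $g$ must lie in $C_P^{2}(\mathbb{R})$, requiring $|g'(w)|^2=4w^2\le P(w)$ and $|g''(w)|=2\le P(w)$; the smallest $P(w)=A+B|w|^r$ meeting both is $P(w)=2+4w^2$, so $A=2$, $B=4$, $r=2$. Since $r>1$ the constants are $(\alpha_2,\beta_2,\gamma_2)=(5,7,3\mu_3)$, $(\tilde\alpha_2,\tilde\beta_2,\tilde\gamma_2)=(14,26,15\mu_3)$, and $c_2=2$, while $h_2=\|h'\|+\|h''\|$. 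Substituting into (\ref{thm3.5b}), using $\mathbb{E}|W|^r=\mathbb{E}W^2=1$ exactly, and factorising the double sum $\sum_i\sum_l|\mathbb{E}[X_i^{p+1}]|[\,\cdot\,]_l=n|\mathbb{E}[X_1^3]|\cdot n[\,\cdot\,]_1$ by independence collapses everything to the i.i.d.\ form; dividing by $2n^2$ then produces the four terms of (\ref{c4.12}) with constants near $187$, $131$, $704$ and $468$ after numerical evaluation and rounding.

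The conceptual content is therefore modest — the two bounds are direct corollaries of Theorem \ref{thm3.2} for $g(w)=w^2$ — and the genuine work lies in two places. First, one must correctly match the regularity of each test-function class to the part of the theorem it can access: the Lipschitz class forces the $O(n^{-1/2})$ bound of part (ii), whereas the $C_b^2$ class permits the $O(n^{-1})$ even-function bound of part (iv). Second, and this is the main obstacle, the proof is an exercise in disciplined bookkeeping: one must track the numerous numerical constants, the various absolute moments $\mathbb{E}|X_1|^3,\mathbb{E}[X_1^4],\mathbb{E}|X_1|^5,\mathbb{E}[X_1^6]$ and the factor $|\mathbb{E}[X_1^3]|$ through the substitution, collapse the sums using the i.i.d.\ structure, and round the resulting constants upward to obtain the clean stated values.
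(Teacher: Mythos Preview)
Your proposal is correct and follows essentially the same approach as the paper: for (\ref{c4.11}) you apply part (ii) of Theorem \ref{thm3.2} with $p=2$, $P(w)=2|w|$ (so $A=0$, $B=2$, $r=1$), bound $\mathbb{E}|W|\le1$, and round up; for (\ref{c4.12}) you apply part (iv) with $p=2$, $P(w)=2+4w^2$ (so $A=2$, $B=4$, $r=2$), use $\mathbb{E}[W^2]=1$, and round up. Your identification of the relevant constants and the numerical checks are accurate.
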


\begin{remark}The bound (\ref{c4.12}) improves on the bound of Theorem 3.1 of \cite{gaunt chi square} for the quantity $|\mathbb{E}[h(W^2)]-\chi_{(1)}^2h|$ by holding for a larger class of test functions, having weaker moment conditions, and also possessing smaller numerical constants. 
\end{remark}

\begin{proof}To prove inequality (\ref{c4.11}), we apply part (ii) of Theorem \ref{thm3.2}. 
We have $g(w)=w^2$ and, since $g'(w)=2w$, we take $P(w)=2|w|$ as our dominating function.
Applying inequality (\ref{thm3.3b}) (taking note of Remark \ref{vdfeb} to get a Wasserstein distance bound) with $A=0, B=2, r=1$ and $p=2$, and using that $\mathbb{E}|W|\leq(\mathbb{E}[W^2])^{1/2}=1$ and $\mu_1=\sqrt{2/\pi}$, we obtain the bound (\ref{c4.11}), after rounding numerical constants up to the nearest integer.


We now prove inequality (\ref{c4.12}). Since $g(w)=w^2$ is an even function, we can apply part (iv) of Theorem \ref{thm3.2} to obtain a $O(n^{-1})$ bound. We have $(g'(w))^2=4w^2$ and $g''(w)=2$, and so take $P(w)=2+4w^2$ as our dominating function. Applying inequality (\ref{thm3.5b}) with $A=2, B=4, r=2$ and $p=2,$ and using that $\mathbb{E}[W^2]=1$ and $\mu_3=2\sqrt{2/\pi}$, now yields inequality (\ref{c4.12}).
\end{proof}

We will use the following lemma in our proof of Proposition \ref{prop6.1}. The proof, which involves an application of Theorem \ref{thm3.2}, is deferred until the end of the section.

\begin{lemma}\label{steinlem2}Let $X_1,\ldots,X_n$ be i.i.d$.$ random variables with $X_1=_d(I-p_1)/\sqrt{p_1p_2}$, where $I\sim \mathrm{Ber}(p_1)$. Let $W=n^{-1/2}\sum_{i=1}^nX_i$. Then, for $h\in C_b^2(\mathbb{R}^+)$,
\begin{align}\label{g1g1bound}|\mathbb{E}[W^3h'(W^2)]|\leq\frac{2976}{\sqrt{np_1p_2}}\{\|h'\|+\|h''\|\}.
\end{align}
\end{lemma}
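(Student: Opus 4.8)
The plan is to express the quantity $\mathbb{E}[W^3 h'(W^2)]$ in the language of Stein's method for functions of a normal random vector, and then bound it using Theorem \ref{thm3.2}. Observe that $W^3 h'(W^2)$ is, up to constant, a derivative of an even composite function: writing $\phi(w) = g(w)\,(h' \circ g)(w)$ with $g(w) = w^2$, we have $\phi'(w) = 2w\,h'(w^2) + 2w^3 h''(w^2)$, so that $W^3 h'(W^2)$ relates to a function of $W = n^{-1/2}\sum_i X_i$ whose distribution is close to that of a standard normal $Z$. The strategy is to introduce a test function $\tilde h$ and an even function $\tilde g$ so that $\mathbb{E}[W^3 h'(W^2)] = \Delta_{\tilde h}(\tilde g(W), \tilde g(Z)) + \mathbb{E}[\tilde h(\tilde g(Z))]$ up to controllable terms, where the latter Gaussian expectation either vanishes or is handled directly. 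Concretely, I would take $\tilde g(w) = w^2$ (even, so part (iv) of Theorem \ref{thm3.2} applies) and choose $\tilde h$ so that $\tilde h(g(w))$ captures the $w^3 h'(w^2)$ behaviour; the factor $w^3 = w \cdot w^2$ is the source of the extra power that makes the dominating polynomial $P$ grow, and the evenness of $g$ is what lets us use the faster $O(n^{-1/2})$ bound here rather than the generic rate.

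First I would identify the exact Gaussian expectation $\mathbb{E}[Z^3 h'(Z^2)]$, which by integration by parts (or by the identity $\mathbb{E}[Zf(Z)] = \mathbb{E}[f'(Z)]$ for the standard normal) should reduce to an explicit, small, dimension-free quantity involving $\|h'\|$ and $\|h''\|$; this provides the ``target'' that the left-hand side is being compared against. Then I would write the difference $\mathbb{E}[W^3 h'(W^2)] - \mathbb{E}[Z^3 h'(Z^2)]$ as $\Delta_{\tilde h}(\tilde g(W), \tilde g(Z))$ for an appropriate $\tilde h \in C_b^p(\mathbb{R}^+)$ with $\tilde g(w) = w^2$, and apply part (iv) of Theorem \ref{thm3.2} with $p = 2$, dominating function $P(w) = A + B|w|^r$ chosen to bound the relevant derivatives of $\tilde g$ (here $r$ will be forced up by the extra factor of $w$, so likely $r$ around $1$ or $2$). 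The moment inputs $\mathbb{E}|X_1|^k$ for the Bernoulli-type variable $X_1 = (I - p_1)/\sqrt{p_1 p_2}$ satisfy $\mathbb{E}|X_1|^k \leq (p_1 p_2)^{1 - k/2}$, so after substituting these and using $\mathbb{E}[W^2] = 1$, $\mathbb{E}|W|^r \leq 1$ for $r \leq 2$, the $n$- and $(p_1 p_2)$-dependence collapses to the claimed $1/\sqrt{np_1 p_2}$ form, and the numerical constant $2976$ emerges after collecting and rounding up.

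The main obstacle I anticipate is the bookkeeping in matching $W^3 h'(W^2)$ to the precise $\Delta_{\tilde h}(\tilde g(W), \tilde g(Z))$ framework: the quantity is not literally $h(g(W))$ but rather $W^3 h'(W^2)$, so some care is needed either to absorb the factor $W^3$ into a modified test function or to handle it via the Stein-operator representation directly (e.g.\ recognising $W^3 h'(W^2)$ as arising from applying the Stein operator to a solution $f$ of the relevant equation and reading off the bound from Proposition \ref{fprop} or the intermediate expectation bounds of Lemma \ref{lem3.6}). A cleaner route, which I would pursue if the direct substitution gets unwieldy, is to bound $\mathbb{E}[W^3 h'(W^2)]$ by treating it as an instance of the expectations controlled in Lemma \ref{lem3.6}, so that the solution-derivative bounds of Section \ref{sec2} feed in directly and the constants can be tracked explicitly. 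The secondary difficulty is purely arithmetic, namely verifying that the accumulated constants and the moment bounds $\mathbb{E}|X_1|^k \leq (p_1 p_2)^{1-k/2}$ combine, after using $p_1 p_2 \leq 1/4$ and the standing assumption $np_1 p_2 \geq 1$, to give exactly the single term $2976/\sqrt{np_1 p_2}$.
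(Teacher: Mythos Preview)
Your plan has a structural gap: the function $w\mapsto w^3 h'(w^2)$ is \emph{odd}, so it cannot be written as $\tilde h(\tilde g(w))$ with $\tilde g(w)=w^2$ (any function of $w^2$ is even). Consequently your proposed route via part (iv) of Theorem \ref{thm3.2} with $\tilde g(w)=w^2$ cannot be made to work, and the anticipated bookkeeping obstacle is not cosmetic but fatal to this decomposition. The same oddness, however, is exactly what makes the Gaussian term trivial: $\mathbb{E}[Z^3 h'(Z^2)]=0$ on the nose by symmetry of $Z$, so there is no ``explicit, small'' residual to control by integration by parts.

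The paper exploits this directly. It takes the identity test function and sets $g(w)=w^3 h'(w^2)$ itself (not $w^2$), so that $\mathbb{E}[W^3 h'(W^2)]=\mathbb{E}[g(W)]-\mathbb{E}[g(Z)]$ is already in the $\Delta_h$ form with $h(w)=w$. One then bounds
\[
|g'(w)|=|3w^2 h'(w^2)+2w^4 h''(w^2)|\leq \tfrac{3}{2}\|h'\|+\big(2\|h''\|+\tfrac{3}{2}\|h'\|\big)w^4,
\]
so the correct dominating polynomial has $r=4$ (not $r\in\{1,2\}$), and applies part (ii) of Theorem \ref{thm3.2} with $p=2$, which already delivers the $O(n^{-1/2})$ rate without any evenness assumption. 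Feeding in the Bernoulli moment bounds $\mathbb{E}|X_1|^k\leq(p_1p_2)^{1-k/2}$ and $\mathbb{E}[W^4]\leq 3+(np_1p_2)^{-1}$ and simplifying then yields the constant $2976$. Note also that part (v) of Theorem \ref{thm3.2} is implicitly used, since with $h(w)=w$ one only needs $g\in C_{P,*}^{p-1}(\mathbb{R})$, which is what the bound on $|g'|$ provides.
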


\noindent{\emph{Proof of Proposition \ref{prop6.1}.}} (i)
We begin by proving inequalities (\ref{p2.41}) and (\ref{p2.42}).
 Let $p=p_1$, so that $p_2=1-p$. As $U_1+U_2=n$, a short calculation gives that 
\begin{equation*}\chi^2=\bigg(\frac{U_1-np}{\sqrt{np(1-p)}}\bigg)^2.
\end{equation*}
Since $U_1\sim \mathrm{Bin}(n,p)$,  it can be expressed as a sum of i.i.d.\ indicator random variables: $U_1=\sum_{i=1}^nI_i$, where $I_i\sim\mathrm{Ber}(p)$. We can therefore write $\chi^2=W^2$, where $W=n^{-1/2}\sum_{i=1}^n X_i$, for $X_1,\ldots, X_n$ i.i.d.\ random variables with $X_1=(I_1-p)/\sqrt{p(1-p)}$. We have that $\mathbb{E}[X_1]=0$, $\mathbb{E}[X_1^2]=1$ and $|\mathbb{E}[X_1^m]|\leq\mathbb{E}|X_1|^m\leq (p(1-p))^{1-m/2}$ for all $m\geq2$. The assumptions for inequalities (\ref{c4.11}) and (\ref{c4.12}) therefore hold. 
Plugging these moment bounds into (\ref{c4.11}) and rounding  constants up to the nearest integer then yields the bound
\begin{equation}\label{ssll}d_{\mathrm{W}}(\mathcal{L}(\chi^2),\chi_{(1)}^2)\leq\frac{1}{\sqrt{np_1p_2}}\bigg(24+\frac{17}{\sqrt{np_1p_2}}\bigg).
\end{equation}
We obtain the simplified bound (\ref{p2.41}) as follows. Let $Y\sim\chi_{(1)}^2$. Observe that, for $h\in\mathcal{H}_{\mathrm{W}}$,
\begin{align*}
	|\mathbb{E}[h(\chi^2)]-\chi^2_{(1)}h|\leq\|h'\|\mathbb{E}|\chi^2-Y|\leq\|h'\|\big(\mathbb{E}[\chi^2]+\mathbb{E}[Y]\big)=2\|h'\|\leq2,
\end{align*}
and so 
\begin{equation}\label{p4.2ineq1}d_{\mathrm{W}}(\mathcal{L}(\chi^2),\chi_{(1)}^2)\leq2.
\end{equation}
 Now, the upper bound in (\ref{ssll}) is greater than the upper bound in (\ref{p4.2ineq1}) if $\sqrt{np_1p_2}<17$, and so we may take $\sqrt{np_1p_2}\geq17$ in (\ref{ssll}), and doing so yields the bound (\ref{p2.41}). Similarly, we obtain inequality (\ref{p2.42}) by plugging the moment bounds into (\ref{c4.12}) and simplifying the bound as we did in deriving inequality (\ref{p2.41}).

\vspace{2mm}

\noindent{(ii)} To prove inequality (\ref{p2.43}), we use the following bound, which can be found by examining the proof of Theorem 2.3 of \cite{gaunt pd}:
\begin{equation}\label{polm}
	|\mathbb{E}[h(T_\lambda)]-\chi_{(1)}^2h|\leq|\mathbb{E}[h(\chi^2)]-\chi_{(1)}^2h|+\frac{|(\lambda-1)(4\lambda+7)|\|h'\|}{(\lambda+1)\sqrt{n}}\bigg(\frac{1}{\sqrt{p_1}}+\frac{1}{\sqrt{p_2}}\bigg).
\end{equation}
Using inequality (\ref{p2.41}) to bound $|\mathbb{E}[h(\chi^2)]-\chi_{(1)}^2h|$ and also using the inequality $1/\sqrt{p_1}+1/\sqrt{p_2}\leq\sqrt{2/(p_1p_2)}$ now yields the desired bound. We note that inequality (\ref{polm}) was derived under the assumption that $np_*\geq1$, and that if $\lambda\geq2$ then also $np_*\geq2(\lambda-2)^2$, where $p_*=\mathrm{min}\{p_1,p_2\}$. However, if these assumptions are not satisfied then the bound in (\ref{p2.43}) exceeds 2 (the upper bound in (\ref{p4.2ineq1})), and so there is no need to impose these conditions in the statement of the theorem; similar comments apply to inequality (\ref{pdrr}) below.

Finally, we prove inequality (\ref{p2.44}). We use the following bound, which can be found by examining the proof of Theorem 2.2 of \cite{gaunt pd}:
\begin{align}
	|\mathbb{E}[h(T_\lambda)]-\chi_{(1)}^2h|&\leq \frac{|(\lambda-1)(\lambda-2)(12\lambda+13)|}{6(\lambda+1)n}\bigg(\frac{1}{p_1}+\frac{1}{p_2}\bigg)\|h'\|\nonumber\\
\label{pdrr}	&\quad+\frac{19(\lambda-1)^2}{18n}\bigg(\frac{1}{\sqrt{p_1}}+\frac{1}{\sqrt{p_2}}\bigg)^2\|h''\|+|\mathbb{E}[h(\chi^2)]-\chi_{(1)}^2h|+R,
\end{align}
where
\begin{align*}
	R=\frac{|\lambda-1|}{3}\bigg|\mathbb{E}\bigg[\bigg(\frac{S_1^3}{\sqrt{np_1}}+\frac{S_2^3}{\sqrt{np_2}}\bigg)h'(W^2)\bigg]\bigg|,
\end{align*}
and $S_j=(U_j-np_j)/\sqrt{np_j}$, $j=1,2$.
To bound $R$, we express $S_1$ and $S_2$ in terms of $W$. Using the representation of $W^2=\chi^2$ given at the beginning of the proof and the alternative representation $W=(U_2-np_2)/{\sqrt{np_1p_2}}$,
which is obtained using a similar calculation, we have the representations $S_1=\sqrt{p_2}W$ and $S_2=\sqrt{p_1}W. $ Therefore
\begin{align}
R&=\frac{|\lambda-1|}{3}\bigg(\frac{p_2^{3/2}}{\sqrt{np_1}}+\frac{p_1^{3/2}}{\sqrt{np_2}}\bigg)|\mathbb{E}[W^3h'(W^2)]|\nonumber\\
\label{pdrr2}&	\leq\frac{992|\lambda-1|}{np_1p_2}\big(p_2^{3/2}p_1^{1/2}+p_1^{3/2}p_2^{1/2}\big)\big(\|h'\|+\|h''\|\big)\leq\frac{496|\lambda-1|}{np_1p_2}\big(\|h'\|+\|h''\|\big),
\end{align}
where we used Lemma \ref{steinlem2} to obtain the first inequality. We now obtain inequality (\ref{p2.44}) by substituting inequality (\ref{pdrr2}) and our bound (\ref{p2.42}) for $|\mathbb{E}[h(\chi^2)]-\chi_{(1)}^2h|$ into (\ref{pdrr}), and simplifying the resulting bound using the formula $1/p_1+1/p_2= 1/(p_1p_2)$ and the inequality $(1/\sqrt{p_1}+1/\sqrt{p_2})^2\leq2/(p_1p_2)$, for $0<p_1,p_2<1$ such that $p_1+p_2=1$.
\qed

\vspace{3mm}

\noindent{\emph{Proof of Corollary \ref{cor4.4}.}	We first prove inequality (\ref{kk1}). Recall from the proof of Proposition \ref{prop6.1} that when $r=2$ we can write Pearson's statistic as $\chi^2=W^2$, where $W=n^{-1/2}\sum_{i=1}^{n}X_{i}$ for $X_{1},\ldots,X_{n}$ i.i.d.\ random variables with $X_1=_d(I_1-p)/\sqrt{p(1-p)}$ and $I_1\sim \mathrm{Ber}(p)$. Recall also that if $Z\sim N(0,1)$ then $Z^2\sim\chi_{(1)}^2$. Thus, for any $z>0$, 
	\begin{align*}
		|\mathbb{P}(W^2\leq z)-\mathbb{P}(Z^2\leq z)|&=|\mathbb{P}(-\sqrt{z}\leq W^2\leq \sqrt{z})-\mathbb{P}(-\sqrt{z}\leq Z^2\leq \sqrt{z})|\\
		&\leq|\mathbb{P}(W\leq\sqrt{z})-\mathbb{P}(Z\leq z)|+|\mathbb{P}(W\leq -\sqrt{z})-\mathbb{P}(Z\leq -\sqrt{z})|,
	\end{align*}
from which we deduce that $	d_{\mathrm{K}}(\mathcal{L}(\chi^2),\chi^2_{(1)})\leq2d_{\mathrm{K}}(\mathcal{L}(W),\mathcal{L}(Z))$.
Now, using the Berry-Esseen theorem with the best available numerical constant of $C=0.4748$ \cite{s11}, we get:
\begin{align*}
	d_{\mathrm{K}}(\mathcal{L}(\chi^2),\chi^2_{(1)})\leq2\frac{0.4748\mathbb{E}|X_1^3|}{(\mathbb{E}[X_1^2])^{3/2}\sqrt{n}}=\frac{0.9496}{\sqrt{np_1p_2}}.
\end{align*}

We now prove inequality (\ref{kk2}). Let $\alpha>0$, and for fixed $z>0$ define a function $h_\alpha:\mathbb{R}^+\rightarrow[0,1]$ by $h_\alpha(x)=1$ if $x\leq z$; $h_\alpha(x)=1-2(x-z)^2/\alpha^2$ if $z<x\leq z+\alpha/2$; $h_\alpha(x)=2(x-(z+\alpha))^2/\alpha^2$ if $z+\alpha/2<x\leq z+\alpha$; and $h_\alpha(x)=0$ if $x\geq z+\alpha$. 
Then $h_\alpha'$ is Lipschitz with $\|h_\alpha'\|=2/\alpha$ and $\|h_\alpha''\|=4/\alpha^2$.  Let $Y\sim\chi_{(1)}^2$. Using inequality (\ref{p2.44}) now yields
\begin{align}&\mathbb{P}(T_\lambda\leq z)-\mathbb{P}(Y\leq z)\nonumber\\
&\quad\leq \mathbb{E}[h_\alpha(T_\lambda)]-\mathbb{E}[h_\alpha(Y)]+\mathbb{E}[h_\alpha(Y)]-\mathbb{P}(Y\leq z)\nonumber \\
&\quad\leq \frac{1}{np_1p_2}\bigg\{(892+496|\lambda-1|)\bigg(\frac{2}{\alpha}+\frac{4}{\alpha^2}\bigg)+\frac{19}{9}(\lambda-1)^2\cdot\frac{4}{\alpha^2}\nonumber\\
\label{kol123}&\quad\quad+\frac{|(\lambda-1)(\lambda-2)(12\lambda+13)|}{6(\lambda+1)}\cdot\frac{2}{\alpha}
	\bigg\}+\mathbb{P}(z\leq Y\leq z+\alpha).
\end{align}
We note the inequality $\mathbb{P}(z\leq Y\leq z+\alpha)\leq\sqrt{2\alpha/\pi}$ (see \cite[p.\ 754]{gaunt chi square}). An upper bound now follows on applying this inequality to (\ref{kol123}) and choosing $\alpha=
c(np_1p_2)^{-2/5}$, for some universal constant $c>0$. To simplify the bound, we used that basic inequality $|\lambda-1|\leq 1+(\lambda-1)^2$.  Similarly, we can obtain a lower bound that is the negative of the upper bound, which completes the proof. 
\qed

\vspace{3mm}

\noindent{\emph{Proof of Lemma \ref{steinlem2}.} Let $g(w)=w^3h'(w^2)$. We begin by noting that $\mathbb{E}[g(Z)]=\mathbb{E}[Z^3h'(Z^2)]=0$. This is because the standard normal distribution is symmetric about the origin ($Z=_d -Z$, for $Z\sim N(0,1)$) and $g(w)$ is an odd function ($g(w)=-g(-w)$ for all $w\in\mathbb{R}$), meaning that $\mathbb{E}[g(Z)]=\mathbb{E}[g(-Z)]=-\mathbb{E}[g(Z)]$, whence $\mathbb{E}[g(Z)]=0$. Therefore, we can write $\mathbb{E}[W^3h'(W^2)]=\mathbb{E}[W^3h'(W^2)]-\mathbb{E}[Z^3h'(Z^2)]$.  

We shall obtain the bound (\ref{g1g1bound}) by applying part (ii) of Theorem \ref{thm3.2} with $h(w)=w$ and $g(w)=w^3h'(w^2)$. Using the basic inequality $2a^2\leq 1+a^4$, we have, for $w\in\mathbb{R}$,
\begin{equation*} |g'(w)|=|2w^4h''(w^2)+3w^2h'(w^2)|\leq 2w^4\|h''\|+3w^2\|h'\|\leq\frac{3}{2}\|h'\|+\Big(2\|h''\|+\frac{3}{2}\|h'\|\Big)w^4.
\end{equation*}
We therefore can apply part (ii) of Theorem \ref{thm3.2} with $A=3\|h'\|/2$, $B=2\|h''\|+3\|h'\|/2$, $r=4$ and $p=2$, and using the bound (\ref{thm3.3b}) gives that
\begin{align*}&|\mathbb{E}[W^3h'(W^2)]|=|\mathbb{E}[W^3h'(W^2)]-\mathbb{E}[Z^3h'(Z^2)]| \\
&\leq \frac{3}{2\sqrt{n}}\bigg[\frac{21}{2}\|h'\|\mathbb{E}|X_1^3|+4\Big(2\|h''\|+\frac{3}{2}\|h'\|\Big)\bigg(72\mathbb{E}|X_1^3|\mathbb{E}[W^4]+\frac{72}{n^{2}}\mathbb{E}|X_1^{7}|+40\sqrt{\frac{2}{\pi}}\mathbb{E}|X_1^3|\bigg)\bigg]\\
&\leq\frac{1}{\sqrt{np_1p_2}}\bigg[\|h'\|\bigg(2247+\frac{648}{np_1p_2}+\frac{648}{(np_1p_2)^2}\bigg)+\|h''\|\bigg(2975+\frac{864}{np_1p_2}+\frac{864}{(np_1p_2)^2}\bigg)\bigg]\\
&\leq \frac{1}{\sqrt{np_1p_2}}\{\|h'\|+\|h''\|\}\bigg(2975+\frac{864}{np_1p_2}+\frac{864}{(np_1p_2)^2}\bigg),
\end{align*}
where we used that $\mu_5=8\sqrt{2/\pi}$, and that $\mathbb{E}|X_1^3|\leq(p_1p_2)^{-1/2}$, $\mathbb{E}|X_1^7|\leq(p_1p_2)^{-5/2}$ and $\mathbb{E}[W^4]=3(n-1)(\mathbb{E}[X_1^2])^2/n+\mathbb{E}[X_1^4]/n\leq 3+1/(np_1p_2)$. We obtain the final simplified bound (\ref{g1g1bound}) using a similar argument to the one used in the proof of Proposition \ref{prop6.1}.
\qed

\appendix

\section{Further proofs}\label{appendix}

\noindent{\emph{Proof of Lemma \ref{lem2}.}} The case $r=0$ is trivial. Let us prove inequality (\ref{abineq}) for integer $r\geq1$. By the binomial theorem, 
\begin{align*}
	(ax+by)^r=\sum_{k=0}^{r}\binom{r}{k}(ax)^k(by)^{r-k}\leq\sum_{k=0}^{r}\binom{r}{k}a^kb^{r-k}(x^r+y^r)=(a+b)^r(x^r+y^r),
\end{align*}
where the inequality follows from an application of the basic inequality $x^ky^{r-k}\leq x^r+y^r$.
The proof for general $r>0$ is exactly the same except that we must instead use the generalised binomial theorem. The proof of  inequality (\ref{abcineq}) is also very similar, with the same basic argument, but instead we apply the generalised trinomial theorem and the basic inequality $x^iy^jz^k\leq x^r+y^r+z^r$ for $i,j,k\geq0$ such that $i+j+k=r$.
\qed
\vspace{3mm}

\noindent{\emph{Proof of Lemma \ref{lem3}.}} Making the change of variables $u=t^2/2$ allows us to write $T_r(w)$ in terms of the upper incomplete gamma function $\Gamma(a,x)=\int_{x}^{\infty}u^{a-1}\mathrm{e}^{-u}\,\mathrm{d}u$:
\begin{equation}\label{Trineq}
	T_r(w)=2^{(r-1)/2}w\mathrm{e}^{w^2/2}\Gamma\bigg(\frac{r+1}{2}, \frac{w^2}{2}\bigg).
\end{equation}
We now note two upper bounds for the incomplete gamma function. If $0<a\leq1,$ then $\Gamma(a,x)\leq x^{a-1}\mathrm{e}^{-x}$ for all $x>0$ (See \cite{jameson16}), whilst if $a>1$, then $\Gamma(a,x)\leq Bx^{a-1}\mathrm{e}^{-x}$, for $x>\frac{B}{B-1}(a-1), B>1$ (see \cite{np00}). For $0\leq r\leq 1$, we apply the bound of \cite{jameson16} to (\ref{Trineq}) to obtain inequality (\ref{Tr1}). To obtain inequality (\ref{Tr2}), we instead apply the inequality of \cite{np00} with $B=2$ to (\ref{Trineq}). \qed
		
\section*{Acknowledgements}
HS is supported by an EPSRC PhD Studentship.
		
		\footnotesize


\begin{thebibliography}{99}
			\addcontentsline{toc}{section}{References}

\bibitem{ag20} Anastasiou, A. and Gaunt, R. E. Multivariate normal approximation of the maximum likelihood estimator via the delta method. \emph{Braz. J. Probab. Stat.} $\mathbf{34}$ (2020), 136--149.

\bibitem{ar20} Anastasiou, A. and Reinert, G. Bounds for the asymptotic distribution of the likelihood ratio. \emph{Ann. Appl. Probab.} $\mathbf{30}$ (2020),  608--643.
			
\bibitem{a10} Assylebekov, Z. A. Convergence Rate of Multinomial Goodness-of-fit Statistics to Chi-square Distribution. \emph{Hiroshima Math. J.} $\mathbf{40}$ (2010), 115--131.

\bibitem{asylbekov} Assylebekov, Z. A., Zubov, V. N. and Ulyanov V. V.  On approximating some statistics of goodness-of-fit tests in the case of three-dimensional discrete data. \emph{Siberian Math. J.} $\mathbf{52}$ (2011),  571--584.			
		
\bibitem{barbour2} Barbour, A. D. Stein's method for diffusion approximations.  \emph{Probab. Theory Rel.} $\mathbf{84}$ (1990),  297--322.

\bibitem{bh85} Barbour, A. D. and Hall, P. On bounds to the rate of convergence in the central limit theorem. \emph{Bull. Lond. Math. Soc.} $\mathbf{17}$ (1985), 151--156. 

\bibitem{bt10} Berend, D. and Tassa, T.  Improved bounds on Bell numbers and on moments of sums of random variables. \emph{Probab.  Math. Stat.} $\mathbf{30}$ (2010), 185--205.

\bibitem{chen} Chen, L. H. Y., Goldstein, L. and Shao, Q.--M.  \emph{Normal Approximation by Stein's Method.} Springer, 2011.

\bibitem{cr84} Cressie, N. and Read, T. R. C.  Multinomial Goodness-of-Fit Tests. \emph{J. Roy. Stat. Soc. B Met.}  $\mathbf{46}$ (1984),  440--464.


\bibitem{dgv17} D\"{o}bler, C, Gaunt, R. E. and Vollmer, S. J.  An iterative technique for bounding derivatives of solutions of Stein equations.  \emph{Electron. J. Probab.} $\mathbf{22}$ no. 96 (2017).


\bibitem{elezovic} Elezovi\'c, N., Giordano, C. and Pe\v{c}ari\'c, J.  The best bounds in Gautschi's inequality. \emph{Math. Inequal. Appl.} $\mathbf{3}$ (2000),  239--252.

\bibitem{fgrs22} Fischer, A., Gaunt, R. E., Reinert, G. and Swan, Y. Normal approximation for the posterior in exponential families. arXiv:2209.08806, 2022.

\bibitem{ft50} Freeman, M. F. and Tukey, J. W. Transformations Related to the Angular and the Square Root. \emph{Ann. Math. Statist.} $\mathbf{21}$ (1950),  607--611.

\bibitem{fv20} Fujikoshi, Y. and Ulyanov, V. V. \emph{Non-Asymptotic Analysis of Approximations for Multivariate Statistics.} Springer Briefs, 2020.

\bibitem{gaunt inequality} Gaunt, R. E.  Inequalities for modified Bessel functions and their integrals.  \emph{J. Math. Anal. Appl.} $\mathbf{420}$ (2014), 373--386. 

\bibitem{gaunt rate} Gaunt, R. E. Rates of Convergence in Normal Approximation Under Moment Conditions Via New Bounds on Solutions of the Stein Equation.  \emph{J. Theoret. Probab.} $\mathbf{29}$ (2016),  231--247.

\bibitem{gaunt normal} Gaunt, R. E. Stein's method for functions of multivariate normal random variables. \emph{Ann. I. H. Poincare-PR} $\mathbf{56}$ (2020), 1484--1513. 

\bibitem{gaunt pd} Gaunt, R. E.  Bounds for the chi-square approximation of the power divergence family of statistics. To appear in \emph{J. Appl. Probab.}, 2022+.

\bibitem{gl22} Gaunt, R. E. and Li, S. Bounding Kolmogorov distances through Wasserstein and related integral probability metrics. arXiv:2201.12087, 2022.

\bibitem{gaunt chi square} Gaunt, R. E., Pickett, A. and Reinert, G.  Chi-square approximation by Stein's method with application to Pearson's statistic. \emph{Ann. Appl. Probab.} $\mathbf{27}$ (2017),  720--756. 

\bibitem{gr21} Gaunt, R. E. and Reinert, G. Bounds for the chi-square approximation of Friedman's statistic by Stein's method. To appear in \emph{Bernoulli}, 2022+.

	
		\bibitem{goldstein1} Goldstein, L. and Rinott, Y.  Multivariate normal approximations by Stein's method and size			bias couplings.  \emph{J. Appl. Probab.} $\mathbf{33}$ (1996),   1--17.
			
		\bibitem{gotze} G\"{o}tze, F.  On the rate of convergence in the multivariate CLT.  \emph{Ann. Probab.} $\mathbf{19}$ (1991),  724--739.
		
\bibitem{gu03} G\"{o}tze, F. and Ulyanov, V. V.  Asymptotic distribution of $\chi^2$-type statistics. Preprint 03--033 Research group Spectral analysis, asymptotic distributions and stochastic dynamics, Bielefeld Univ., Bielefeld, 2003.		
		
\bibitem{gn96} Greenwood, P. E. and Nikulin, M. S. \emph{A Guide to Chi-Squared Testing.} Wiley, New York, 1996.

\bibitem{ismail} Ismail, M. E. H., Lorch, L. and Muldoon, M. E. Completely monotonic functions associated with the gamma function and its q-analogues. \emph{J. Math. Anal. Appl.} $\mathbf{116}$ (1986),  1--9.
		
\bibitem{jameson16} Jameson, G. J. O. The incomplete gamma functions. \emph{Math. Gazette} $\mathbf{100}$ (2016), 298--306.		
\bibitem{mann97} Mann, B.  Convergence rate for a $\chi^2$ of a multinomial. \emph{Unpublished manuscript}, 1997.
		
\bibitem{mz37} Marcinkiewicz, J. and Zygmund, A. Sur les fonctions ind\'ependantes. \emph{Fundam. Math.} $\mathbf{29}$ (1937),  60--90.

\bibitem{np00} Natalini, P. and Palumbo, B. Inequalities for the incomplete gamma function. \emph{Math. Inequal. Appl.} $\mathbf{3}$ (2000), 69--77.
	
\bibitem{olver} Olver, F. W. J., Lozier, D. W., Boisvert, R. F. and Clark, C. W.  \emph{NIST Handbook of Mathematical Functions.} Cambridge University Press, 2010.

\bibitem{pearson} Pearson, K.  On the criterion that a given system of deviations is such that it can be reasonably supposed to have arisen from random sampling.  \emph{Phil. Mag.} $\mathbf{50}$ (1900),  157--175.

\bibitem{pickett} Pickett, A. \emph{Rates of Convergence of $\chi^2$ Approximations via Stein's Method.}  DPhil thesis, University of Oxford, 2004.


\bibitem{pu13} Prokhorov, Y. V. and Ulyanov, V. V. Some Approximation Problems in Statistics and Probability. Eichelsbacher, P. et al. (eds.), \emph{Limit Theorems in Probability, Statistics and Number Theory}, Springer Proceedings in Mathematics \& Statistics $\mathbf{42}$ (2013),  235--249.

\bibitem{pu21} Puchkin, N. and Ulyanov, V. Inference via Randomized Test Statistics. To appear in \emph{Ann. I. H. Poincare-PR}, 2022+.

\bibitem{read84} Read, T. R. C. Closer asymptotic approximations for the distributions of the power divergence goodness-of-fit statistics. \emph{Ann. I. Stat. Math.}  $\mathbf{36}$ (1984),  59--69.

\bibitem{reinert 0}  Reinert, G.  Three general approaches to Stein's method. In \emph{An Introduction
to Stein's Method. Lect. Notes Ser. Inst. Math. Sci. Natl. Univ. Singap.} Eds: Barbour, A. D. and Chen L. H. Y., $\mathbf{4}$ (2005), pp. 183--221.
Singapore Univ. Press, Singapore.


\bibitem{s11} Shevtsova, I. On the absolute constants in the Berry Esseen type inequalities for identically distributed summands. arXiv:1111.6554, 2011.
			
\bibitem{stein} Stein, C.  A bound for the error in the normal approximation to the the distribution of a sum of dependent random variables.  In \emph{Proc. Sixth Berkeley Symp. Math. Statist. Prob.} (1972), vol. 2, Univ. California Press, Berkeley,  583--602.

\bibitem{ulyanov} Ulyanov, V. V. and Zubov, V. N. Refinement on the convergence of one family of goodness-of-fit statistics to chi-squared distribution. \emph{Hiroshima Math. J.} $\mathbf{39}$ (2009),  133--161.

\bibitem{yarnold} Yarnold, J. K.  Asymptotic approximations for the probability that a sum of lattice random vectors lies in a convex set. \emph{Ann. Math. Statist.} $\mathbf{43}$, (1972),  1566--1580.

			
\end{thebibliography}
\end{document}